\newtheorem{theorem}{Theorem}[section]
\newtheorem{lemma}[theorem]{Lemma}
\newtheorem{proposition}[theorem]{Proposition}
\newtheorem{corollary}[theorem]{Corollary}
\newtheorem{definition}[theorem]{Definition}
\newtheorem{example}[theorem]{Example}
\newtheorem*{citelemma}{Lemma}
\newtheorem*{remark}{Remark}
\numberwithin{equation}{section}
\begin{document}


\title[A new proximity function estimate]{A new proximity function estimate on the quotient of the difference and the derivative of a meromorphic function}
\author{Lasse Asikainen}
\author{Juha-Matti Huusko}
\author{Risto Korhonen}
\address{Department of Physics and Mathematics, University of Eastern Finland, P.O. Box 111, FI-80101 Joensuu, Finland}

\email{lasseasi@uef.fi}
\email{juha-matti.huusko@uef.fi}
\email{risto.korhonen@uef.fi}

\maketitle
\makeatletter

\begin{abstract}
It is shown that, under certain assumptions on the growth and value distribution of a meromorphic function $f(z)$,
\begin{equation*}
    m\left(r,\frac{\Delta_cf - ac}{f' - a}\right)=S(r,f'),
\end{equation*}
where $\Delta_c f=f(z+c)-f(z)$ and $a,c\in\mathbb{C}$. This estimate implies a lower bound for the Nevanlinna ramification term in terms of the difference operator with an arbitrary shift. As a consequence it follows, for instance, that if $f$ is an entire function of hyper-order $<1$ whose derivative does not attain a value $a\in\mathbb{C}$ often
$$N\left(r,\frac{1}{f'-a}\right)=S(r,f),$$
then the finite difference $\Delta_c f$ cannot attain the value $ac$ significantly more often
$$N\left(r,\frac{1}{\Delta_c f-ac}\right)=S(r,f).$$
Additional applications of the estimate above include a new type of a second main theorem, deficiency relations between $\Delta_cf$ and $f'$ and new Clunie and Mohon'ko type lemmas.
\end{abstract}

\section{Introduction}

The difference operator measures the change of a function between adjacent points and is often used to depict discrete or stepwise variations. In contrast, the derivative measures the rate of change of a function with respect to its independent variable, illustrating how a function changes continuously over a given interval. Although these operators are fundamentally different, formally, there are many similarities between the properties of the difference operator and the derivative. These include properties related to Wronskian and Casoratian determinants, Laurent and factorial series, solution methods for linear equations, and the Wiman-Valiron theory for differential \cite{hayman:74} and difference \cite{ishizakiy:04} operators. A systematic treatment of properties of the difference operator has been given, for instance, by Boole \cite{boole:09} and by Milne-Thomson \cite{milne-thomson:33}.

Links between the value distribution properties of the usual derivative $f'$ and those of the finite difference $\Delta_c f(z) = f(z+c)-f(z)$ for a meromorphic function $f$ are a topic of interest, not least due to the fact, that $c^{-1}\Delta_c f\to f'$ as $c\to0$ pointwise. In \cite{bergweilerl:07MR} Bergweiler and Langley establish that for fixed $c\in\mathbb{C}\setminus\{0\}$,  $c^{-1}\Delta_c f(z)\to f'(z)$ as $|z|\to\infty$ outside an $\varepsilon$-set if $f$ is meromorphic with order less than $1$, which is a very strong relation between the two operators. One is then led to seek weaker relations between $f'$ and $\Delta_c f$ that hold for a more general set of meromorphic functions. Much work has already been done to investigate the difference quotient $f(z+c)/f(z)$. For example Chiang and Feng \cite{chiangf:09} have derived asymptotic relationships among difference quotients and logarithmic derivatives for meromorphic functions of finite order.

This paper establishes the following upper bound under certain assumptions on the hyper-order of a meromorphic and non-$c$-periodic $f$:
\begin{equation}
    \label{eq:logarithmic_derivatedifference}
    m\left(r,\frac{\Delta_cf - ac}{f' - a}\right)=S(r,f),
\end{equation}
where $a\in\mathbb{C}$ is arbitrary, and where an exceptional set of finite logarithmic measure is implicitly assumed in $S(r,f)$ here and in the sequel. This result relates the value distributions of $f'$ and $\Delta_c f$ in certain ways. For example, a simple consequence of the result is that for entire $f$, it implies
$$N(r,1/\Delta_c f)\leq N(r,1/f')+S(r,f),$$
meaning that $\Delta_c f$ cannot have significantly more zeros than $f'$, for any $c$.

It is readily seen, that the lemma on the difference quotient
$$m\left(r,\frac{\Delta_cf}{f}\right)=S(r,f),$$
which is the difference analogue of the lemma on the logarithmic derivative proven independently by Halburd and Korhonen in \cite{halburdk:06JMAA} and by Chiang and Feng in \cite{chiangf:08}, follows from \eqref{eq:logarithmic_derivatedifference} and the lemma on the logarithmic derivative. However, this derivation of the lemma on the difference quotient requires slightly more restrictive hypothesis than those derived by Halburd, Korhonen and Tohge in \cite{halburdkt:14TAMS}.

If $f$ is an entire function of sufficiently regular growth and of finite order, then further consequences of \eqref{eq:logarithmic_derivatedifference} include the following relation between the deficiencies of $f'$ and $\Delta_c f$ (see Proposition \ref{prop:defect_inequality} below)
$$\delta(a,f')\leq\delta(ac,\Delta_c f), \quad a\in\mathbb{C},$$
and the following relation (see Proposition \ref{prop:c_separated_index_inequality} below) between the index of $c$-separated $a$-values $\pi_c(a,f)$ for $f$ (see Definition \ref{def:pair_counting_function} below) and the deficiency of zeros of $f'$
$$\sum_{a\in\mathbb{C}}\pi_c(a,f)\leq 1-\delta(0,f').$$

The remainder of the paper is organized as follows. Section 2 contains the main result and its immediate consequences. Section 3 contains the proof of the main result. Section 4 details some more consequences of the main result, divided into four subsections with different themes.

\section{The main result}

In this section we give the main result of the paper and its immediate consequences. In order to state the main result, we need to define the hyper-order of a meromorphic function.

\begin{definition}[Hyper-order]
If $T:\mathbb{R}_+\to\mathbb{R}_+$ is an increasing function, then the hyper-order $\xi$ of $T$ is defined by
$$\xi=\limsup_{r\to\infty}\frac{\log\log T(r)}{\log r}.$$

If $f$ is a meromorphic function, then its hyper-order is defined as the hyper-order of $T(\cdot,f)$. Hence, in particular, functions with a finite order of growth have hyper-order $0$.
\end{definition}

\begin{theorem}
\label{thm:ram_vs_pair}
Let $c\in\mathbb{C}\setminus\{0\}$, $a\in\mathbb{C}$ and let $f$ be a transcendental meromorphic function of hyper-order $\xi<1$ that is not $c$-periodic. Then for $\epsilon\in(0,1)$
\begin{equation}
\label{eq:main_res}
\begin{split}
    m\left(r, \frac{\Delta_c f - ac}{f' - a}\right)
    &= O\left(\frac{T(r,f')}{r^{1-\xi-\epsilon}}\right)\\
    &\quad+O\left(R_{\epsilon,c}\left(r,\frac{1}{f'-a}\right)\frac{N\left(r,\frac{1}{f'-a}\right)}{r^{3/2-2\xi-\epsilon}}+R_{\epsilon,c}(r,f')\frac{N(r,f')}{r^{3/2-2\xi-\epsilon}}\right)
\end{split}
\end{equation}
as $r\to\infty$ outside an exceptional set $E=E(\epsilon,a,c,f)$ of finite logarithmic measure, where
$$R_{\epsilon,c}(r,g)=\frac{n_{\angle\epsilon,c}(r,g)}{n(r,g)}$$
with $n_{\angle\epsilon}(r, g)$ counting a pole $|z_0|<r$ of $g$ according to its multiplicity only if 
$$\left|\sin\left(\text{arg}\ \frac{z_0}{c}\right)\right|\geq1-\sqrt{\epsilon}.$$
\end{theorem}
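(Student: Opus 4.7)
The plan is to reduce to the case $a=0$ via the substitution $g(z)=f(z)-az$, represent $\Delta_c g/g'$ as an integral of difference ratios $g'(z+tc)/g'(z)$, and estimate those by Poisson--Jensen while tracking the angular position of zeros and poles of $g'$ relative to the direction of $c$.

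Setting $g(z)=f(z)-az$, one has $g'=f'-a$, $\Delta_c g=\Delta_c f-ac$, and the poles of $g$ coincide with those of $f$, so $T(r,g)=T(r,f)+O(\log r)$, the hyper-orders of $g$ and $f$ agree, and $N(r,g')$, $N(r,1/g')$, $R_{\epsilon,c}(r,g')$, $R_{\epsilon,c}(r,1/g')$ equal the corresponding quantities for $f'$ and $1/(f'-a)$. Hence it suffices to prove
\[
m\!\left(r,\frac{\Delta_c g}{g'}\right)=O\!\left(\frac{T(r,g')}{r^{1-\xi-\epsilon}}\right)+O\!\left(R_{\epsilon,c}\!\left(r,\tfrac{1}{g'}\right)\frac{N(r,1/g')}{r^{3/2-2\xi-\epsilon}}+R_{\epsilon,c}(r,g')\frac{N(r,g')}{r^{3/2-2\xi-\epsilon}}\right).
\]
The fundamental theorem of calculus along $[z,z+c]$ gives $\Delta_c g(z)=c\int_0^1 g'(z+tc)\,dt$, so
\[
\frac{\Delta_c g(z)}{g'(z)}=c\int_0^1\frac{g'(z+tc)}{g'(z)}\,dt,
\]
and the elementary inequality $\log^+\!\bigl|c\int_0^1 F(t)\,dt\bigr|\le\log^+|c|+\sup_{t\in[0,1]}\log^+|F(t)|$ yields
\[
m\!\left(r,\frac{\Delta_c g}{g'}\right)\le\log^+|c|+\frac{1}{2\pi}\int_0^{2\pi}\sup_{t\in[0,1]}\log^+\!\left|\frac{g'(re^{i\theta}+tc)}{g'(re^{i\theta})}\right|d\theta,
\]
reducing the problem to a uniform-in-$t$ difference-quotient estimate for $g'$.

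For the remaining integral I would expand $\log|g'(z+tc)/g'(z)|$ via Poisson--Jensen on a disk $|w|\le R\asymp r$, splitting into a boundary integral plus a sum over the zeros and poles $\rho$ of $g'$ (and their Schwarz-reflected counterparts). After taking $\log^+$ and averaging in $\theta$, the boundary part yields, via the standard Poisson-kernel derivative estimates used in the Halburd--Korhonen--Tohge proof of the difference analogue of the lemma on the logarithmic derivative, the main contribution $O(T(r,g')/r^{1-\xi-\epsilon})$; the exceptional set $E$ of finite logarithmic measure comes from the Borel-type growth lemma invoked there. The zero/pole sums reduce to estimating $\sup_t\log^+|(z+tc-\rho)/(z-\rho)|$ together with reciprocals. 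I would split the $\rho$'s according to whether $|\sin(\arg(\rho/c))|\ge 1-\sqrt{\epsilon}$: outside this narrow sector the segment $[re^{i\theta},re^{i\theta}+c]$ stays at distance $\gtrsim\sqrt{\epsilon}|c|$ from $\rho$ for all $\theta$, so the corresponding terms are absorbed into the main $T(r,g')/r^{1-\xi-\epsilon}$ estimate; inside the sector -- precisely where $n_{\angle\epsilon,c}$ counts $\rho$ -- a finer estimate, exploiting that only a small angular set of $\theta$ places the segment within $\sqrt{\epsilon}|c|$ of $\rho$, produces the refined bounds involving $R_{\epsilon,c}$ with the faster decay $r^{-(3/2-2\xi-\epsilon)}$.

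The decisive difficulty is this angular analysis: the supremum over $t\in[0,1]$ must be interchanged with the $\theta$-average while retaining a quantitative dependence on $\arg(\rho/c)$, and the extra factor $r^{-1/2}$ upgrading $r^{-1}$ to $r^{-3/2}$ has to be extracted from the measure of those $\theta$ for which a fixed $\rho$ lies within distance $\sqrt{\epsilon}|c|$ of $[re^{i\theta},re^{i\theta}+c]$. The remaining $r^{-\xi}$ factor (completing the exponent $3/2-2\xi-\epsilon$) is then recovered from the standard hyper-order bookkeeping of the Borel lemma applied to $T(r,g')$.
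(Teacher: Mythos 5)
Your overall skeleton (reduce to $a=0$ with $g=f-az$, write $\Delta_c g = c\int_0^1 g'(z+tc)\,dt$, take $\sup_t$ inside a Poisson--Jensen expansion, and sort the zeros and poles of $g'$ by $\arg(\rho/c)$) matches the paper's. But the two quantitative claims you make about the zero/pole sums are respectively false and insufficient, and they are exactly the crux. First, it is not true that for $\rho$ with $|\sin\arg(\rho/c)|<1-\sqrt{\epsilon}$ the segment $[re^{i\theta},re^{i\theta}+c]$ stays at distance $\gtrsim\sqrt{\epsilon}|c|$ from $\rho$ for all $\theta$: take $c>0$ real and $\rho=r$ on the positive real axis; at $\theta=0$ the segment passes through $\rho$. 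What is true --- and what the paper's Lemma \ref{lemma:p1_z1_estimate} proves by enlarging $\sup_{t\in[0,1]}$ to $\sup_{t\in\mathbb{R}}$ and projecting orthogonally onto the direction $ic$ --- is that $\min_{t}|re^{i\theta}+tc-\rho|\ge r\,|\cos\theta'-\beta|$ after a change of variables, with $\beta=(|\rho|/r)\,|\sin\arg(\rho/c)|$; when $|\beta|\le 1-\epsilon$ the zeros of $\cos\theta'-\beta$ are simple, a linear lower bound applies, and $\int_0^{2\pi}\min_t|\cdot|^{-\delta}\,d\theta=O(r^{-\delta})$ for every $\delta<1$, which is how the out-of-sector points get absorbed into the main term.

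Second, for the in-sector points you propose to extract the decay $r^{-(3/2-2\xi-\epsilon)}$ purely from the measure of those $\theta$ that place the segment near $\rho$. That cannot work: for $\rho=irc/|c|$ one has $\beta=1$, $\cos\theta'-1$ vanishes quadratically, and the sharp bound on $\int_0^{2\pi}\sup_t|re^{i\theta}+tc-\rho|^{-\delta_k}\,d\theta$ is only $O(r^{-\delta_k/2})$ with $\delta_k<1/2$ forced for convergence, i.e.\ roughly $r^{-1/2}$ per point; no angular argument alone gives more. The missing idea is that such points are dangerous only when, in addition, $|\rho|$ lies in the thin annulus $r-|c|-\epsilon\le|\rho|\le r+|c|+\epsilon$ (resp.\ $|a_k|\ge s-\epsilon$ for the reflected points $s^2/\overline{a_k}$). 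The paper then applies Lemma \ref{lemma:hyperorder_shift_estimate} to the unintegrated counting functions $n(\cdot,g')$ and $n_{\angle\epsilon,c}(\cdot,g')$, whose hyper-order is at most $\xi<1$, to bound the annulus count by $o\bigl(n(s)/r^{1-\xi-\lambda}\bigr)$ outside a set of finite logarithmic measure. Multiplying the $r^{-1/2}$ angular bound by this $r^{-(1-\xi-\lambda)}$, and paying a further factor $r^{\xi+o(1)}$ when converting $n$ to $N$ with $\alpha-1\asymp(\log T)^{-(1+\lambda)}$, is what produces the exponent $3/2-2\xi-\epsilon$ and the ratio $R_{\epsilon,c}=n_{\angle\epsilon,c}/n$. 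You flag this angular analysis as the decisive difficulty but do not supply it, and the route you sketch for it would not close the argument.
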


\begin{remark}
The proof of Theorem \ref{thm:ram_vs_pair} is presented in the next chapter. It utilizes many of the techniques found in other works that involve estimating difference quotients, such as \cite{chiangf:09}, \cite{halburdk:06JMAA} and \cite{bergweilerl:07}. The ideas behind using the hyper-order of $f$ originate from the Proof of Theorem 5.1 in \cite{halburdkt:14TAMS}. 
\end{remark}

\begin{remark}
In the above theorem, we see that when the hyper-order $\xi$ of $f$ satisfies $\xi<3/4$ or when
$$\max\left\{R_{\epsilon,c}(r,f'), R_{\epsilon,c}\left(r,\frac{1}{f'-a}\right)\right\}=O\left(\dfrac{1}{r^{2\xi-3/2+2\epsilon}}\right)$$ as $r\to\infty$, then the error terms in the above results are of growth class $S(r,f')$ outside the exceptional set. 

By the definition of $R_{\epsilon,c}$ and $n_{\angle\epsilon,c}$, it is seen that only the poles and $a$-points of $f'$ inside certain sectors of $\mathbb{C}$ count toward the right-most error term in \eqref{eq:main_res}. The sectors are centred around the half-lines pointing from the origin toward $ic$ and $-ic$ with central angles of $\pi-2\sin^{-1} (1-\sqrt{\epsilon})$.
\end{remark}

The following lemma by Halburd, Korhonen and Tohge from \cite{halburdkt:14TAMS} is used in this paper multiple times since it gives a very useful estimate for differences of functions with hyper-order $<1$.

\begin{lemma}[Lemma 8.3 from \cite{halburdkt:14TAMS}]
    \label{lemma:hyperorder_shift_estimate}
    Let $T:R^+\to R^+$ be an increasing continuous function of hyper-order $\xi<1$. Then if $u>0$ is fixed, we have
    $$T(r+u)-T(r)=o\left(\frac{T(r)}{r^\delta}\right)$$
    where $\delta\in(0,1-\xi)$ and $r$ runs to infinity outside exceptional set of finite logarithmic measure.
\end{lemma}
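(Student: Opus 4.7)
The plan is to show, for each $\eta>0$, that the ``bad'' set
\[
E_\eta=\left\{r>0 : T(r+u)-T(r)>\eta\,T(r)/r^{\delta}\right\}
\]
has finite logarithmic measure, then to glue together the sets $E_{1/n}$ by a diagonal argument into a single exceptional set outside of which the ratio $r^{\delta}(T(r+u)-T(r))/T(r)$ tends to $0$. The hyper-order assumption gives $\log T(r)\leq r^{\xi+\epsilon}$ for every $\epsilon>0$ and all large $r$, so I fix $\delta\in(0,1-\xi)$ and pick $\epsilon$ so small that $\beta:=1-\xi-\epsilon-\delta>0$; this positive slack will drive convergence of a dyadic sum.

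The core estimate is on $\mathrm{logmeas}(E_\eta\cap[R,2R])$. For $r\in E_\eta$ with $r$ large, one has $\log T(r+u)-\log T(r)\geq\log(1+\eta/r^{\delta})\geq\eta/(2r^{\delta})$, so using $r\leq 2R$ throughout the dyadic interval,
\[
\frac{\eta}{2(2R)^{\delta}}\,\mathrm{logmeas}(E_\eta\cap[R,2R])\leq\int_R^{2R}\bigl[\log T(r+u)-\log T(r)\bigr]\,\frac{dr}{r}\leq\frac{1}{R}\int_R^{2R}\bigl[\log T(r+u)-\log T(r)\bigr]\,dr.
\]
A change of variable $s=r+u$ in the first summand reduces the last integral to the boundary contributions $\int_{2R}^{2R+u}\log T(s)\,ds-\int_R^{R+u}\log T(s)\,ds$, which by monotonicity of $T$ is at most $u\log T(2R+u)\leq u(2R+u)^{\xi+\epsilon}$. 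Therefore $\mathrm{logmeas}(E_\eta\cap[R,2R])\leq C_u\,R^{\xi+\epsilon+\delta-1}/\eta=C_u\,R^{-\beta}/\eta$ for all sufficiently large $R$.

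Summing geometrically over dyadic blocks yields $\mathrm{logmeas}(E_\eta\cap[2^{k_0},\infty))\leq C_u'\,2^{-k_0\beta}/\eta$, which is finite for each $\eta>0$ and, crucially, tends to $0$ as $k_0\to\infty$. Choosing $k_0=k_0(n)$ so that $\mathrm{logmeas}(E_{1/n}\cap[2^{k_0(n)},\infty))<2^{-n}$, the union $E=\bigcup_n\bigl(E_{1/n}\cap[2^{k_0(n)},\infty)\bigr)$ has logarithmic measure at most $1$; outside $E$, for each $n$ the ratio $r^{\delta}(T(r+u)-T(r))/T(r)$ is bounded by $1/n$ as soon as $r\geq 2^{k_0(n)}$, which forces it to $o(1)$. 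The main obstacle is that a naive bound on a single window $[1,R]$ yields a logarithmic-measure estimate of order $R^{\delta}$, which is useless; the dyadic decomposition is what converts the decaying factor $R^{\xi+\epsilon-1}$ coming from hyper-order into a summable series, and the diagonal step packages the countable family $\{E_{1/n}\}$ into a single set of finite logarithmic measure.
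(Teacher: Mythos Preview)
The paper does not supply its own proof of this lemma; it is quoted verbatim as Lemma~8.3 of \cite{halburdkt:14TAMS} and used as a black box. So there is nothing in the present paper to compare your argument against.

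That said, your argument is sound and is essentially the standard one. The chain
\[
\frac{\eta}{2(2R)^{\delta}}\,\mathrm{logmeas}\bigl(E_\eta\cap[R,2R]\bigr)
\ \le\ \int_{E_\eta\cap[R,2R]}\bigl[\log T(r+u)-\log T(r)\bigr]\frac{dr}{r}
\ \le\ \frac{1}{R}\int_R^{2R}\bigl[\log T(r+u)-\log T(r)\bigr]\,dr
\]
is correct (the middle inequality uses non-negativity of the integrand to extend from $E_\eta$ to the whole dyadic block), the telescoping after the substitution $s=r+u$ is right, and the dyadic summation followed by the diagonal choice of $k_0(n)$ cleanly produces a single exceptional set of finite logarithmic measure outside of which the ratio tends to~$0$. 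One cosmetic point: when you drop $-\int_R^{R+u}\log T(s)\,ds$ you are implicitly assuming $T\ge 1$ on that interval. Since $T$ is increasing this holds for all large $R$ once $T$ exceeds~$1$ somewhere; alternatively bound the difference by $u\bigl(\log T(2R+u)-\log T(R)\bigr)$ and absorb the $-u\log T(R)$ term as an $O(1)$ contribution. Either way the estimate $O(R^{-\beta}/\eta)$ survives.
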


The corollary below shows that the sectors that count toward $R_{\epsilon,c}$ may be rotated arbitrarily in order to improve the bound.

\begin{corollary}
\label{thm:ram_vs_pair-corollary}
Let the assumptions of Theorem~\ref{thm:ram_vs_pair} be satisfied. Moreover, assume that there is an open sector $S$ of $\mathbb{C}$ such that $f'$ has no poles nor $a$-points in $S\cup(-S)$. Then for sufficiently small $\epsilon\in(0,1)$
\begin{equation}
\begin{split}
    m\left(r, \frac{\Delta_c f - ac}{f' - a}\right)
    = O\left(\frac{T(r,f')}{r^{1-\xi-\epsilon}}\right)
\end{split}
\end{equation}
as $r\to\infty$ outside an exceptional set $E$ of finite logarithmic measure.
\end{corollary}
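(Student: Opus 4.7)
The plan is to deduce the corollary from Theorem~\ref{thm:ram_vs_pair} after establishing that the specific direction $\pm ic$ about which the bad sectors in the definition of $R_{\epsilon,c}$ are centred is not essential: the same argument should yield the estimate \eqref{eq:main_res} with the bad sectors rotated by an arbitrary angle $\alpha$. Concretely, I would revisit the proof of Theorem~\ref{thm:ram_vs_pair} in Section~3 and locate the step where the direction $\pm ic$ is fixed---most plausibly a step involving a straight-line path from $z$ to $z+c$ in a difference-quotient estimate, or an analogous geometric decomposition of the plane. Replacing this construction with its rotation by $\alpha$ should produce a counterpart of Theorem~\ref{thm:ram_vs_pair} in which the role of $\pm ic$ is played by $\pm ic\, e^{i\alpha}$, with the same error structure.

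Granted this rotational flexibility, the remainder is a careful choice of parameters. Given the open sector $S$ in the hypothesis, I would select $\alpha$ so that both half-lines through $\pm ic\, e^{i\alpha}$ lie in the interior of $S\cup(-S)$; since $S$ is open and $S\cup(-S)$ is centrally symmetric about the origin, such an $\alpha$ exists and leaves a positive angular margin between the rays $\pm ic\, e^{i\alpha}$ and the boundary of $S\cup(-S)$. I would then pick $\epsilon>0$ small enough that the angular width $\pi-2\sin^{-1}(1-\sqrt{\epsilon})$ of the rotated bad sectors is strictly less than this margin, so that the sectors sit entirely inside $S\cup(-S)$. The assumption that $f'$ has neither poles nor $a$-points in $S\cup(-S)$ then forces the two counting functions $n_{\angle\epsilon,c}(r,f')$ and $n_{\angle\epsilon,c}(r,1/(f'-a))$, now evaluated against the rotated sectors, to vanish identically, so the corresponding $R_{\epsilon,c}$-terms in \eqref{eq:main_res} drop out and only the $O(T(r,f')/r^{1-\xi-\epsilon})$ contribution survives.

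The hardest part will be the first step. A tempting shortcut is to conjugate $f$ by a rotation $w\mapsto e^{i\alpha}w$ and apply Theorem~\ref{thm:ram_vs_pair} to the resulting function, but this fails: the sector condition $|\sin(\text{arg}(z/c))|\ge 1-\sqrt{\epsilon}$ is invariant under simultaneous rotation of $z$ and $c$, so any rotation of $f$ comes paired with a matching rotation of the shift and the bad direction rotates in lockstep with $c$. Thus the rotational freedom must be extracted from the internals of the proof of Theorem~\ref{thm:ram_vs_pair} rather than imposed externally, and verifying that every estimate in Section~3 is genuinely rotation-equivariant is where the real technical work lies.
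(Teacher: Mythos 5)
There is a genuine gap at the first (and, as you acknowledge, hardest) step: the direction $\pm ic$ of the bad sectors in Theorem~\ref{thm:ram_vs_pair} is \emph{not} an artifact of the proof that can be rotated away while keeping the shift $c$ fixed. It enters through Lemma~\ref{lemma:p1_z1_estimate}, where $\min_{t\in\mathbb{R}}|re^{i\theta}+tc-c_k|$ is bounded below by projecting onto the line spanned by $ic$; this is the \emph{only} direction for which the projection of $re^{i\theta}+tc$ is independent of $t$, so projecting onto a rotated line gives no uniform lower bound at all (the image of the moving point can collide with the image of $c_k$). Geometrically the obstruction is intrinsic: the segment $[z,z+c]$ with $|z|=r$ passes tangentially by points $c_k$ with $|c_k|\approx r$ lying in the directions $\pm ic$, and stays within bounded distance of such a point for an arc of $\theta$ of length of order $r^{-1/2}$ rather than $r^{-1}$ --- this is exactly the source of the $r^{-\delta/2}$ versus $r^{-\delta}$ discrepancy between the two assertions of Lemma~\ref{lemma:p1_z1_estimate}. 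So no amount of reworking the internals of Section~3 produces a version of \eqref{eq:main_res} with the sectors centred at $\pm ic\,e^{i\alpha}$ for arbitrary $\alpha$ and the shift still equal to $c$. Your second paragraph (shrinking $\epsilon$ so the sectors fit inside $S\cup(-S)$, whence the $R_{\epsilon,c}$-terms vanish) is fine, and it is precisely the paper's argument in the easy case $ic\in S\cup(-S)$; the problem is the case $ic\notin S\cup(-S)$.

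The correct way to realize your rotation intuition is to change not the proof of the theorem but the shift itself: since $\Delta_cf-ac=\int_0^c(f'(z+u)-a)\,du$ and the decomposition telescopes, the paper writes $c=c_1+c_2$ with $\tilde c_1,\tilde c_2\in iS$ linearly independent over $\mathbb{R}$ (so that $\pm ic_1,\pm ic_2$ point into the pole- and $a$-point-free region), obtains
\begin{equation*}
\Delta_cf(z)-ac=\bigl[\Delta_{c_1}f(z)-ac_1\bigr]+\bigl[\Delta_{c_2}f(z+c_1)-ac_2\bigr],
\end{equation*}
and applies Theorem~\ref{thm:ram_vs_pair} to each summand with its own shift, for which the $R_{\epsilon,c_j}$-terms vanish; the second summand additionally requires the difference analogue of the lemma on the logarithmic derivative to replace the denominator $f'(z)-a$ by $f'(z+c_1)-a$, and Lemma~\ref{lemma:hyperorder_shift_estimate} to return to $T(r,f')$. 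You would need to supply this (or an equivalent) decomposition to close the argument.
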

\begin{proof}
If $ic\in S\cup(-S)$, the theorem follows as soon as $\epsilon$ is small enough; in that situation $n_{\angle\epsilon,c}$ and $R_{\epsilon,c}$ will be identically zero.

Assume that $ic\not\in S\cup(-S)$. We find $c_1,c_2\in iS\cup(-iS)$ such that $c=c_1+c_2$. The constants $c_1,c_2$ can be found as follows. Let $\tilde{c_1},\tilde{c_2}\in iS\setminus\{0\}$ such that $\tilde{c_1}\neq t\tilde{c_2}$ for all $t\in(0,\infty)$. Because $\tilde{c_1}$ and $\tilde{c_2}$ are linearly independent, $c=\alpha_1\tilde{c_1}+\alpha_2\tilde{c_2}$ for some $\alpha_1,\alpha_2\in\mathbb{R}\setminus\{0\}$. Choose $c_1=\alpha\tilde{c_1}$ and $c_2=\alpha\tilde{c_2}$. Then by noting that 

\begin{equation*}
\begin{split}
\Delta_c f(z) - ac&=f(z+c)-f(z)-ac\\
&=[f(z+c_1+c_2)-f(z+c_1)-ac_2]+[f(z+c_1)-f(z)-ac_1]\\
&=\Delta_{c_1} f(z)-ac_1+\Delta_{c_2} f(z+c_1)-ac_2,
\end{split}
\end{equation*}
we have
\begin{equation*}
m\left(r, \frac{\Delta_c f - ac}{f' - a}\right)
\leq 
m\left(r, \frac{\Delta_{c_1} f(z) - ac_1}{f'(z) - a}\right)
+m\left(r, \frac{\Delta_{c_2} f(z+c_1) - ac_2}{f'(z) - a}\right)+O(1),
\end{equation*}
where the right-most proximity function can be further estimated with the usual lemma on the difference quotient (for example Theorem 5.1 in \cite{halburdkt:14TAMS} works for functions of hyper-order $<1$):
\begin{equation*}
\begin{split}
m\left(r, \frac{\Delta_{c_2} f(z+c_1) - ac_2}{f'(z) - a}\right) &= m\left(r, \frac{(\Delta_{c_2} f(z+c_1) - ac_2)(f’(z+c_1)-a))}{(f'(z) – a) (f’(z+c_1)-a)}\right) \\
&\leq m\left(r, \frac{\Delta_{c_2} f(z+c_1) - ac_2}{f’(z+c_1)-a}\right) + m\left(r, \frac{f’(z+c_1)-a)}{f'(z) – a}\right) \\
&= m\left(r, \frac{\Delta_{c_2} f(z+c_1) - ac_2}{f’(z+c_1)-a}\right) + O\left(\frac{T(r,f')}{r^{1-\xi-\epsilon}}\right).
\end{split}
\end{equation*}
This estimate comes with an exceptional set of finite logarithmic measure, which we include in our overall exceptional set.
Thus, for $r>0$ outside some exceptional set of finite logarithmic measure, we have
\begin{equation*}
\begin{split}
m\left(r, \frac{\Delta_c f - ac}{f' - a}\right)
&\leq 
m\left(r, \frac{\Delta_{c_1} f(z) - ac_1}{f'(z) - a}\right)\\
&+m\left(r, \frac{\Delta_{c_2} f(z+c_1) - ac_2}{f’(z+c_1)-a}\right)+O\left(\frac{T(r,f')}{r^{1-\xi-\epsilon}}\right)\\
&=O\left(\frac{T(r,f')}{r^{1-\xi-\epsilon}}\right)+O\left(\frac{T(r,f'(z+c_1))}{r^{1-\xi-\epsilon}}\right)
\\&= O\left(\frac{T(r,f')}{r^{1-\xi-\epsilon}}\right),
\end{split}
\end{equation*}
where we have applied Theorem \ref{thm:ram_vs_pair} with $\epsilon>0$ small enough such that the counting sector of $n_{\angle\epsilon}$ in the statement of Theorem \ref{thm:ram_vs_pair} is contained in $S\cup(-S)$ for both $c_1$ and $c_2$. The final inequality follows by Lemma \ref{lemma:hyperorder_shift_estimate}, where we choose a smaller $\epsilon$ if necessary and include the exceptional set of finite logarithmic measure in ours.
\end{proof}
Theorem \ref{thm:ram_vs_pair} extends to higher order differences and derivatives.

\begin{corollary}
\label{corollary:ram_vs_pair_higher_order}
Let $f$ be a transcendental meromorphic function of hyper-order $\xi<3/4$, such that $\Delta_c^{n-1} f$ is not $c$-periodic for $n\in\mathbb{N}$. Then
\begin{equation}
\label{eq:corollary_eq1}
\begin{split}
m\left(r, \frac{\Delta_c^{n} f-ac^n}{f^{(n)}-a}\right)&=\sum_{k=1}^{n}O\left(\frac{T(r,\Delta_c^{n-k}f^{(k)})}{r^{1-4\xi/3-\epsilon}}\right)\\
&=\sum_{k=1}^{n}S(r,\Delta_c^{n-k}f^{(k)})=S(r,f')
\end{split}
\end{equation}
as $r\to\infty$ outside an exceptional set $E$ of finite logarithmic measure, where $\epsilon>0$ is a small enough constant. Moreover, if $\Delta_c^{n+k-1} f$ is not $c$-periodic for $k\in\mathbb{N}$, then
\begin{equation}
\label{eq:corollary_eq2}
m\left(r, \frac{\Delta_c^{n+k} f}{f^{(n)}}\right)= S(r,f').
\end{equation}
\begin{proof}
    Consider the case where $a=0$ in the first statement. Since the difference and derivative operators commute, i.e $\Delta_c(f')=(\Delta_c f)'$, by successively applying Theorem \ref{thm:ram_vs_pair} and taking the union of the consequent exceptional sets we get
    \begin{align*}
        &m\left(r, \frac{\Delta_c^n f}{f^{(n)}}\right)
        \leq m\left(r, \frac{\Delta_c^{n-1} f'}{f^{(n)}}\right)+m\left(r, \frac{\Delta_c^n f}{\Delta_c^{n-1} f'}\right)\\[6pt]
        &=m\left(r, \frac{\Delta_c^{n-1} f'}{f^{(n)}}\right)+m\left(r, \frac{\Delta_c(\Delta_c^{n-1} f)}{(\Delta_c^{n-1} f)'}\right)\\[6pt]
        &\leq m\left(r, \frac{\Delta_c^{n-1} f'}{f^{(n)}}\right)+O\left(\frac{T(r, \Delta_c^{n-1} f')}{r^{1-4\xi/3-\epsilon}}\right)\\[6pt]
        &\leq \cdots \leq m\left(r, \frac{\Delta_c f^{(n-1)}}{f^{(n)}}\right)+\sum_{k=1}^{n-1}O\left(\frac{T(r,\Delta_c^{n-k}f^{(k)})}{r^{1-4\xi/3-\epsilon}}\right)\\[6pt]
        &\leq \sum_{k=1}^{n}O\left(\frac{T(r,\Delta_c^{n-k}f^{(k)})}{r^{1-4\xi/3-\epsilon}}\right),
    \end{align*}
    as $r\to\infty$. This proves the first inequality in \eqref{eq:corollary_eq1}. The second inequality immediately follows. The final inequality also follows from the first by Lemma \ref{lemma:hyperorder_shift_estimate}. Applying the $a=0$ case with the function $g(z)=f(z)-az^n/n!$ yields the general case for the first statement, where $a\in\mathbb{C}$. Now \eqref{eq:corollary_eq2} follows using \eqref{eq:corollary_eq1} and the higher order variant of the lemma on the logarithmic derivative:
    $$m\left(r, \frac{\Delta_c^{n+k} f}{f^{(n)}}\right)
    \leq m\left(r, \frac{\Delta_c^{n+k} f}{f^{(n+k)}}\right)+m\left(r, \frac{f^{(n+k)}}{f^{(n)}}\right)= S(r,f').$$
\end{proof}
\end{corollary}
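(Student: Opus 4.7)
The plan is to peel off one difference-versus-derivative at a time via Theorem \ref{thm:ram_vs_pair}. First I reduce the general $a\in\mathbb{C}$ case to $a=0$ by setting $g(z):=f(z)-az^n/n!$. Since $\Delta_c^n$ annihilates polynomials of degree $<n$ and satisfies $\Delta_c^n(z^n/n!)=c^n$, we get $g^{(n)}=f^{(n)}-a$ and $\Delta_c^n g=\Delta_c^n f-ac^n$; the polynomial perturbation leaves the hyper-order unchanged. If $\Delta_c^n f\equiv ac^n$ the proximity function on the left of \eqref{eq:corollary_eq1} vanishes identically and there is nothing to show; otherwise $\Delta_c^{n-1}g$ is non-$c$-periodic, so applying the $a=0$ version of the statement to $g$ in place of $f$ suffices.

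Now assume $a=0$. Using the commutation $\Delta_c(f')=(\Delta_c f)'$, I telescope
\begin{equation*}
\frac{\Delta_c^n f}{f^{(n)}}
=\prod_{k=0}^{n-1}\frac{\Delta_c^{\,n-k}f^{(k)}}{\Delta_c^{\,n-k-1}f^{(k+1)}}
=\prod_{k=0}^{n-1}\frac{\Delta_c h_k}{h_k'},\qquad h_k:=\Delta_c^{\,n-k-1}f^{(k)},
\end{equation*}
whence by subadditivity of $m$,
\begin{equation*}
m\!\left(r,\frac{\Delta_c^n f}{f^{(n)}}\right)\le\sum_{k=0}^{n-1} m\!\left(r,\frac{\Delta_c h_k}{h_k'}\right).
\end{equation*}
Theorem \ref{thm:ram_vs_pair} with $a=0$ applies to each summand provided every $h_k$ is non-$c$-periodic. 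I verify this by contradiction: if some $h_k$ were $c$-periodic then $\Delta_c^{n-k}f^{(k)}\equiv0$, so $\Delta_c^{n-k}f$ is a polynomial of degree $<k$, and applying $\Delta_c^{k-1}$ to it forces $\Delta_c^{n-1}f$ to be a constant, against the hypothesis.

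Each rung contributes an error of order $O(T(r,h_k')/r^{1-\xi-\epsilon})+O(N(r,\cdot)/r^{3/2-2\xi-\epsilon})$ outside a set of finite logarithmic measure. The elementary inequality $\min(1-\xi,3/2-2\xi)\ge 1-4\xi/3$ on $[0,3/4)$ supplies the uniform denominator $r^{1-4\xi/3-\epsilon}$ of \eqref{eq:corollary_eq1}. Iterating Lemma \ref{lemma:hyperorder_shift_estimate} and combining with the higher-order lemma on the logarithmic derivative converts the characteristic of $\Delta_c^{n-k}f^{(k)}$ into that of $f'$ up to an $S(r,f')$ error, delivering the chain of equalities in \eqref{eq:corollary_eq1} after a finite union of exceptional sets (still of finite logarithmic measure). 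Finally, \eqref{eq:corollary_eq2} drops out of the split
\begin{equation*}
\frac{\Delta_c^{n+k}f}{f^{(n)}}=\frac{\Delta_c^{n+k}f}{f^{(n+k)}}\cdot\frac{f^{(n+k)}}{f^{(n)}},
\end{equation*}
the first factor handled by \eqref{eq:corollary_eq1} with $n+k$ in place of $n$ (the non-$c$-periodicity hypothesis on $\Delta_c^{n+k-1}f$ feeds directly in) and the second by the higher-order logarithmic derivative lemma applied to $f^{(n)}$.

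The main obstacle I expect is bookkeeping rather than conceptual: propagating non-$c$-periodicity through every rung of the telescope, controlling the finite union of exceptional sets, and identifying the single exponent $1-4\xi/3$ that simultaneously majorises both competing error exponents of Theorem \ref{thm:ram_vs_pair} across the whole admissible range $\xi\in[0,3/4)$.
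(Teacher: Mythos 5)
Your proposal is correct and follows essentially the same route as the paper: the telescoping decomposition via $\Delta_c(f')=(\Delta_c f)'$ with Theorem \ref{thm:ram_vs_pair} applied to each rung, the substitution $g(z)=f(z)-az^n/n!$ for general $a$, and the split $\Delta_c^{n+k}f/f^{(n)}=(\Delta_c^{n+k}f/f^{(n+k)})\cdot(f^{(n+k)}/f^{(n)})$ for \eqref{eq:corollary_eq2}. You in fact supply two details the paper leaves implicit --- the propagation of non-$c$-periodicity to every intermediate function $h_k$ and the verification that $\min(1-\xi,\,3/2-2\xi)\geq 1-4\xi/3$ on $[0,3/4)$ --- both of which check out.
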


\begin{corollary}
\label{corollary:counting_function}
Let $f$ be a meromorphic function that satisfies the hypothesis of Corollary \ref{corollary:ram_vs_pair_higher_order} for $n\in\mathbb{N}$. Then
\begin{equation}
\label{eq:jensen_formula}
\begin{split}
    &N\left(r,\frac{1}{\Delta_c^n f - ac^n}\right)-N\left(r,\Delta_c^n f\right)\\
    &\leq N\left(r,\frac{1}{f^{(n)} - a}\right)-N\left(r,f^{(n)}\right)+\sum_{k=1}^{n}O\left(\frac{T(r,\Delta_c^{n-k}f^{(k)})}{r^{1-4\xi/3-\epsilon}}\right)\\
    &= N\left(r,\frac{1}{f^{(n)} - a}\right)-N\left(r,f^{(n)}\right)+S(r,f)
\end{split}
\end{equation}
as $r\to\infty$ outside an exceptional set $E$ of finite logarithmic measure.

\begin{proof}
First apply Jensen's formula to obtain
\begin{equation}
    \begin{split}
        &N\left(r,\frac{1}{\Delta_c^n f - ac^n}\right)-N\left(r,\Delta_c^n f\right)
        =\int_0^{2\pi}\log|\Delta_c^n f(re^{i\theta})-ac^n|\frac{d\theta}{2\pi}+O(1)\\[6pt]
        &=\int_0^{2\pi}\log|f^{(n)}(re^{i\theta})-a|\frac{d\theta}{2\pi}+\int_0^{2\pi}\log\left|\frac{\Delta_c^n f(re^{i\theta})-ac^n}{f^{(n)}(re^{i\theta})-a}\right|\frac{d\theta}{2\pi}+O(1)\\[6pt]
        &\leq N\left(r,\frac{1}{f^{(n)} - a}\right)-N\left(r,f^{(n)}\right)+m\left(r, \frac{\Delta_c^{n} f - ac^n}{f^{(n)} - a}\right)+O(1)
    \end{split}
\end{equation}
and then apply Corollary \ref{corollary:ram_vs_pair_higher_order}.
\end{proof}
\end{corollary}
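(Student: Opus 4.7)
The plan is to reduce the statement to Corollary \ref{corollary:ram_vs_pair_higher_order} via Jensen's formula, using the standard trick of artificially introducing the quotient $(\Delta_c^n f - ac^n)/(f^{(n)}-a)$ inside a logarithm. First, I would apply Jensen's formula to the meromorphic function $\Delta_c^n f - ac^n$, which expresses the left-hand side
$$N\!\left(r,\frac{1}{\Delta_c^n f - ac^n}\right)-N(r,\Delta_c^n f)$$
as $\int_0^{2\pi}\log|\Delta_c^n f(re^{i\theta})-ac^n|\,d\theta/(2\pi)+O(1)$, the $O(1)$ accounting for the Laurent coefficient at $z=0$.

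Next, I would split the integrand using the identity $\log|\Delta_c^n f - ac^n| = \log|f^{(n)}-a|+\log\bigl|(\Delta_c^n f - ac^n)/(f^{(n)}-a)\bigr|$, which is valid off the zero/pole sets (a measure zero set in $\theta$, so irrelevant for the integral). The first piece is handled by Jensen's formula applied to $f^{(n)}-a$, giving
$$\int_0^{2\pi}\log|f^{(n)}(re^{i\theta})-a|\frac{d\theta}{2\pi}=N\!\left(r,\frac{1}{f^{(n)}-a}\right)-N(r,f^{(n)})+O(1).$$
For the second piece, bounding $\log|\cdot|\le\log^+|\cdot|$ shows that the corresponding integral is at most $m(r,(\Delta_c^n f - ac^n)/(f^{(n)}-a))$.

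Combining the two pieces gives the first inequality in \eqref{eq:jensen_formula}, modulo the error $m(r,(\Delta_c^n f - ac^n)/(f^{(n)}-a))$. To finish, I would invoke Corollary \ref{corollary:ram_vs_pair_higher_order}, whose hypotheses coincide exactly with those assumed here, to conclude that this proximity function is bounded by $\sum_{k=1}^{n}O\bigl(T(r,\Delta_c^{n-k}f^{(k)})/r^{1-4\xi/3-\epsilon}\bigr)$ outside an exceptional set of finite logarithmic measure; the same corollary identifies this sum with $S(r,f')$, hence also with $S(r,f)$ since $T(r,f')\le T(r,f)+S(r,f)$ in this hyper-order regime.

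There is essentially no hard step; the argument is a textbook Jensen-plus-proximity-function decomposition, and all the genuine work has already been done in Theorem \ref{thm:ram_vs_pair} and its Corollary \ref{corollary:ram_vs_pair_higher_order}. The only point requiring a little care is that the exceptional set from Corollary \ref{corollary:ram_vs_pair_higher_order} must be inherited here, which is automatic since finite logarithmic measure is preserved under finite unions.
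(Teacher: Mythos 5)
Your proposal is correct and follows exactly the paper's own argument: Jensen's formula applied to $\Delta_c^n f - ac^n$, the split of the integrand via the quotient $(\Delta_c^n f - ac^n)/(f^{(n)}-a)$, the bound $\log|\cdot|\le\log^+|\cdot|$ giving the proximity function, and then Corollary \ref{corollary:ram_vs_pair_higher_order} to control it. The extra care you take with the measure-zero set and the inheritance of the exceptional set is fine but adds nothing beyond what the paper already does implicitly.
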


For entire functions, the above corollary has a particularly simple special case.

\begin{corollary}
Let $f$ be an entire function of hyper-order $\xi$ that is not $c$-periodic, where $c\in\mathbb{C}\setminus\{0\}$, and let $\epsilon>0$. If $\xi < 3/4$, then
\begin{equation*}
    N\left(r,\frac{1}{\Delta_c f - ac}\right)
    \leq N\left(r,\frac{1}{f' - a}\right)+O\left(\frac{T(r,f')}{r^{1-4\xi/3-\epsilon}}\right),
\end{equation*}
as $r\to\infty$ outside an exceptional set of finite logarithmic measure. If $\xi < 1$ and 
$$N\left(r,\frac{1}{f'-a}\right)=S(r,f')$$
then also
\begin{equation*}
    N\left(r,\frac{1}{\Delta_c f - ac}\right)
    =S(r,f').
\end{equation*}
\begin{proof}
The proof of the first statement follows from case $n=1$ of Corollary \ref{corollary:counting_function} and the fact that $f$ is entire. The prove the second statement, one only needs to slightly modify to the proof of Corollary \ref{corollary:counting_function} above: since $N\left(r,\frac{1}{f'-a}\right)=S(r,f')$, the terms in Theorem \ref{thm:ram_vs_pair} that have $R_{\epsilon,c}$-terms as coefficients are equal to $S(r,f')$.
\end{proof}
\end{corollary}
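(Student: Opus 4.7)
The plan is to derive the first inequality directly from Corollary~\ref{corollary:counting_function} with $n=1$, and to obtain the second conclusion by retracing the argument of that corollary while replacing the invocation of Corollary~\ref{corollary:ram_vs_pair_higher_order} with a direct call to Theorem~\ref{thm:ram_vs_pair}, whose hypotheses require only $\xi<1$.

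For the first part, I would specialize Corollary~\ref{corollary:counting_function} to $n=1$ to get
$$N\left(r,\frac{1}{\Delta_c f-ac}\right)-N(r,\Delta_c f)\leq N\left(r,\frac{1}{f'-a}\right)-N(r,f')+O\left(\frac{T(r,f')}{r^{1-4\xi/3-\epsilon}}\right).$$
Since $f$ is entire, so are both $f'$ and $\Delta_c f$, hence $N(r,f')=N(r,\Delta_c f)=0$, and the claimed inequality follows at once.

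For the second part, I would reproduce the Jensen's formula computation from the proof of Corollary~\ref{corollary:counting_function} to obtain
$$N\left(r,\frac{1}{\Delta_c f-ac}\right)\leq N\left(r,\frac{1}{f'-a}\right)+m\left(r,\frac{\Delta_c f-ac}{f'-a}\right)+O(1),$$
where again the pole counting terms drop out by entireness. Since $N(r,1/(f'-a))=S(r,f')$ by hypothesis, it remains to show that the proximity term is $S(r,f')$, and for this I would apply Theorem~\ref{thm:ram_vs_pair} directly. The first error contribution there is of order $T(r,f')/r^{1-\xi-\epsilon}$, which is $S(r,f')$ whenever $\epsilon<1-\xi$. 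The remaining two contributions carry bounded factors $R_{\epsilon,c}(r,1/(f'-a))\leq 1$ and $R_{\epsilon,c}(r,f')\leq 1$ multiplying $N(r,1/(f'-a))$ and $N(r,f')$, and these counting functions are respectively $S(r,f')$ by hypothesis and identically zero by entireness; so those two contributions are $S(r,f')$ regardless of the sign of $3/2-2\xi-\epsilon$.

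The most delicate point is precisely the observation just made: the weaker hyper-order bound $\xi<1$ is admissible in the second statement only because the problematic error terms of Theorem~\ref{thm:ram_vs_pair}, whose denominator $r^{3/2-2\xi-\epsilon}$ can fail to tend to infinity, are already neutralized by the hypothesis $N(r,1/(f'-a))=S(r,f')$ and by the entireness of $f$. Without either of these simplifications one would be forced back into the regime $\xi<3/4$ needed for Corollary~\ref{corollary:ram_vs_pair_higher_order}.
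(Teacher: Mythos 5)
Your first part coincides with the paper's own argument: Corollary~\ref{corollary:counting_function} with $n=1$ together with $N(r,f')=N(r,\Delta_c f)=0$ for entire $f$. That is correct.

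Your second part also follows the paper's route (Jensen's formula reduces everything to showing $m\bigl(r,(\Delta_c f-ac)/(f'-a)\bigr)=S(r,f')$, and one then feeds the hypothesis $N(r,1/(f'-a))=S(r,f')$ into the bound of Theorem~\ref{thm:ram_vs_pair}), but the decisive step is asserted rather than proved. The contribution you must control is not $R_{\epsilon,c}(r,1/(f'-a))\,N(r,1/(f'-a))$ but
$R_{\epsilon,c}(r,1/(f'-a))\,N(r,1/(f'-a))\,r^{-(3/2-2\xi-\epsilon)}$,
and when $\xi\in[3/4,1)$ the exponent $3/2-2\xi-\epsilon$ is negative, so the denominator \emph{amplifies} the counting function by a positive power of $r$. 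From $N(r,1/(f'-a))=o(T(r,f'))$ alone one cannot conclude $N(r,1/(f'-a))\,r^{2\xi-3/2+\epsilon}=o(T(r,f'))$: nothing in the hypothesis excludes, say, $N(r,1/(f'-a))\asymp T(r,f')/\log\log r$, for which the amplified term dominates $T(r,f')$. So your closing assertion that the contribution is $S(r,f')$ ``regardless of the sign of $3/2-2\xi-\epsilon$'' is exactly where the argument would fail; the case $\xi<3/4$ that it does cover is already contained in the first statement. To be fair, the paper's own one-sentence justification makes the same leap, so your reconstruction is faithful to the source; but to actually obtain the conclusion for all $\xi<1$ one needs either a stronger hypothesis (e.g. $N(r,1/(f'-a))=O(T(r,f')/r^{2\xi-3/2+2\epsilon})$, in the spirit of the remark following Theorem~\ref{thm:ram_vs_pair}) or a reworking of the $Z^1_\epsilon$-estimate in the proof of the theorem, not merely the observation you make.
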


\begin{remark}
The difference analogue of the lemma on the logarithmic derivative follows from Corollary \ref{corollary:ram_vs_pair_higher_order} and the lemma on the logarithmic derivative:
$$m(r,\Delta_c f/f) \leq m(r,f'/f)+m(r,\Delta_c f/f')= S(r,f)$$
outside an exceptional set of finite logarithmic measure. However this derivation of the result does not yield optimal hypothesis: for example in \cite{halburdkt:14TAMS} difference analogue of the lemma on the logarithmic derivative is proven for functions of hyper-order $<1$.
\end{remark}

\section{Proof of Theorem \ref{thm:ram_vs_pair}}

Let $S\subset[0,2\pi]$ be the subset such that
\begin{equation}
\label{eq:jensen_formula}
    \begin{split}
        \int_0^{2\pi}\log^+\left|\frac{\Delta_c f(re^{i\theta})-ac}{f'(re^{i\theta})-a}\right|\frac{d\theta}{2\pi}
        &=\int_{S}\log\left|\frac{\Delta_c f(re^{i\theta})-ac}{f'(re^{i\theta})-a}\right|\frac{d\theta}{2\pi}.
    \end{split}
\end{equation}
For all but finitely many $\theta\in S$, i.e. for such $\theta$ that the line segment $[re^{i\theta},re^{i\theta}+c]$ does not contain poles or $a$-points of $f'$, we have the following

\begin{align*}
    &\left|\frac{\Delta_c f(re^{i\theta})-ac}{f'(re^{i\theta})-a}\right|
    =\left|\frac{1}{f'(re^{i\theta})-a}\left(\int_0^c f'(re^{i\theta}+u) du-ac\right)\right|
    =\left|\int_0^c \frac{f'(re^{i\theta}+u)-a}{f'(re^{i\theta})-a} du\right|\\[6pt]
    &\leq\int_0^c\left| \frac{f'(re^{i\theta}+u)-a}{f'(re^{i\theta})-a}\right||du|
    \leq|c|\max_{t\in[0,1]}{\left|\frac{f'(re^{i\theta}+tc)-a}{f'(re^{i\theta})-a}\right|}=|c|\max_{t\in[0,1]}{\left|\frac{g'(re^{i\theta}+tc)}{g'(re^{i\theta})}\right|},
\end{align*}
where $g(z)=f(z)+az$. We substitute this in \eqref{eq:jensen_formula} and estimate the difference quotient using the Poisson-Jensen formula as is standard

\begin{equation}
\label{eq:poisson_jensen}
    \begin{split}
    &\int_0^{2\pi}\log^+\left|\frac{\Delta_c f(re^{i\theta})-ac}{f'(re^{i\theta})-a}\right|\frac{d\theta}{2\pi}-\log c\leq\int_{S}\max_{t\in[0,1]}\log\left|\frac{g'(re^{i\theta}+tc)}{g'(re^{i\theta})}\right|\frac{d\theta}{2\pi}\\[6pt]
    \leq&\int_0^{2\pi}\biggl\{\max_{t\in[0,1]}\int_0^{2\pi}\log|g'(se^{i\psi})|\text{Re}\left(\frac{2tcse^{i\psi}}{(se^{i\psi}-re^{i\theta}-tc)(se^{i\psi}-re^{i\theta})}\right)\frac{d\psi}{2\pi}\\[6pt]
    &+\max_{t\in[0,1]}\sum_{|a_k|<s}\log\left|\frac{re^{i\theta}+tc-a_k}{re^{i\theta}-a_k}\right|+\max_{t\in[0,1]}\sum_{|a_k|<s}\log\left|\frac{s^2-\overline{a_k}re^{i\theta}}{s^2-\overline{a_k}(re^{i\theta}+tc)}\right|\\[6pt]
    &+\max_{t\in[0,1]}\sum_{|b_k|<s}\log\left|\frac{re^{i\theta}-b_k}{re^{i\theta}+tc-b_k}\right|+\max_{t\in[0,1]}\sum_{|b_k|<s}\log\left|\frac{s^2-\overline{b_k}(re^{i\theta}+tc)}{s^2-\overline{b_k}re^{i\theta}}\right|\biggl\}\frac{d\theta}{2\pi}
    \end{split}
\end{equation}
where $s=\frac{\alpha+1}{2}(r+|c|)$ with $\alpha:=\alpha(r)>1$ and $\alpha(r)\to 1$ as $r\to\infty$, and where $\{a_k\}_{k\in\mathbb{N}}$ and $\{b_k\}_{k\in\mathbb{N}}$ are sequences of the zeros and poles of $g'$ respectively, ordered by modulus in ascending order and repeated according to their multiplicities.

We continue to estimate the right-hand side terms in \eqref{eq:poisson_jensen} separately. Starting enumeration from left to right and from top to bottom, we will see that for terms 1, 2 and 5 the maximization does not complicate the situation, and those terms can be handled rather conventionally. However the terms 3 and 4, where the variable of maximization $t$ is in the denominator, require some more work.

Let $\delta\in(0,1)$. Starting with term 1 on the RHS (right-hand side) of \eqref{eq:poisson_jensen}, we swap the order of integration by Fubini's theorem in order to use the well-known fact that
\begin{equation}
    \label{eq:reciprocal_integral_estimate}
    \int_0^{2\pi}\frac{1}{|re^{i\theta}-a|^\delta}\frac{d\theta}{2\pi}\leq\frac{1}{1-\delta}\frac{1}{r^\delta}
\end{equation}
for any $a\in\mathbb{C}$, which yields

\begin{equation}
    \label{eq:term1_estimate}
    \begin{split}
    &\int_0^{2\pi}\max_{t\in[0,1]}\int_0^{2\pi}\log|g'(se^{i\psi})|\text{Re}\left(\frac{2tcse^{i\psi}}{(se^{i\psi}-re^{i\theta}-tc)(se^{i\psi}-re^{i\theta})}\right)\frac{d\psi}{2\pi}\frac{d\theta}{2\pi}\\[6pt]
    \leq&\int_0^{2\pi}\int_0^{2\pi}|\log|g'(se^{i\psi})||\max_{t\in[0,1]}\left\{\frac{2t|c|s}{|se^{i\psi}-re^{i\theta}-tc||se^{i\psi}-re^{i\theta}|}\right\}\frac{d\psi}{2\pi}\frac{d\theta}{2\pi}\\[6pt]
    \leq&\frac{2|c|s}{(s-r-|c|)(s-r)^{1-\delta}}\int_0^{2\pi}|\log|g'(se^{i\psi})||\int_0^{2\pi}\frac{1}{|re^{i\theta}-se^{i\psi}|^\delta}\frac{d\theta}{2\pi}\frac{d\psi}{2\pi}\\[6pt]
    \leq&\frac{2|c|s}{(s-r-|c|)(s-r)^{1-\delta}(1-\delta)r^\delta}\int_0^{2\pi}|\log|g'(se^{i\theta})||\frac{d\theta}{2\pi}\\[6pt]
    \leq&\frac{2|c|^\delta s}{(s-r-|c|)(1-\delta)r^\delta}(m(s,g')+m(s,1/g'))\\[6pt]
    \leq&\frac{4|c|^\delta(\alpha+1)}{(1-\delta)(\alpha-1)r^\delta}(T(\alpha(r+|c|),g')+O(1))
    \end{split}
\end{equation}

For term 2 on the RHS of \eqref{eq:poisson_jensen} we may swap the order of the integral and the sum, since the sum is finite. Again we employ the estimate \eqref{eq:reciprocal_integral_estimate}, as well as using the estimate $\log(1+x)\leq x$ and the concavity of the logarithm:

\begin{equation}
    \label{eq:term2_estimate}
    \begin{split}
    &\int_0^{2\pi}\max_{t\in[0,1]}\sum_{|a_k|<s}\log\left|\frac{re^{i\theta}+tc-a_k}{re^{i\theta}-a_k}\right|\frac{d\theta}{2\pi}\\[6pt]
    \leq&\sum_{|a_k|<s}\frac{1}{\delta}\int_0^{2\pi}\max_{t\in[0,1]}\log\left(1+\left|\frac{tc}{re^{i\theta}-a_k}\right|^{\delta}\right)\frac{d\theta}{2\pi}\\[6pt]
    \leq&\sum_{|a_k|<s}\frac{1}{\delta}\log\left(\int_0^{2\pi}1+\max_{t\in[0,1]}\left|\frac{tc}{re^{i\theta}-a_k}\right|^{\delta}\frac{d\theta}{2\pi}\right)\\[6pt]
    \leq&\sum_{|a_k|<s}\frac{1}{\delta}\log\left(1+|c|^{\delta}\int_0^{2\pi}\frac{1}{|re^{i\theta}-a_k|^{\delta}}\frac{d\theta}{2\pi}\right)\\[6pt]
    \leq&\frac{|c|^{\delta}}{\delta(1-\delta)}\frac{1}{r^\delta}n(s,1/g')
     \leq\frac{2|c|^{\delta}}{\delta(1-\delta)}\frac{\alpha}{\alpha-1}\frac{1}{r^\delta}N(\alpha(r+|c|),1/g').
    \end{split}
\end{equation}
For term 5 we use the same arguments to obtain a similar estimate:

\begin{equation}
    \label{eq:term5_estimate}
    \begin{split}
    &\int_0^{2\pi}\max_{t\in[0,1]}\sum_{|b_k|<s}\log\left|\frac{s^2-\overline{b_k}(re^{i\theta}+tc)}{s^2-\overline{b_k}re^{i\theta}}\right|\frac{d\theta}{2\pi}\\[6pt]
    \leq&\sum_{|b_k|<s}\int_0^{2\pi}\log\left(1+|c|\frac{1}{|re^{i\theta}-s^2/\overline{b_k}|}\right)\frac{d\theta}{2\pi}\\[6pt]
    &\frac{2|c|^{\delta}}{\delta(1-\delta)}\frac{\alpha}{\alpha-1}\frac{1}{r^\delta}N(\alpha(r+|c|),g').
    \end{split}
\end{equation}
The 3rd and 4th RHS terms of \eqref{eq:poisson_jensen}
$$
\int_0^{2\pi}\max_{t\in[0,1]}\sum_{|a_k|<s}\log\left|\frac{s^2-\overline{a_k}re^{i\theta}}{s^2-\overline{a_k}(re^{i\theta}+tc)}\right|\frac{d\theta}{2\pi}
\quad\text{and}\quad
\int_0^{2\pi}\max_{t\in[0,1]}\sum_{|b_k|<s}\log\left|\frac{re^{i\theta}-b_k}{re^{i\theta}+tc-b_k}\right|\frac{d\theta}{2\pi},
$$ 
are both reduced to the same problem, since
$$
    \left|\frac{re^{i\theta}-b_k}{re^{i\theta}+tc-b_k}\right|
    \leq1+\left|\frac{tc}{re^{i\theta}+tc-b_k}\right|
$$
and
$$
    \left|\frac{s^2-\overline{a_k}re^{i\theta}}{s^2-\overline{a_k}(re^{i\theta}+tc)}\right|
    \leq 1+\left|\frac{tc}{re^{i\theta}+tc-s^2/\overline{a_k}}\right|.
$$
We split both into two cases. For this we fix $\epsilon\in(0,1)$. For the 4th term, we divide the sum over the poles of $g'$ into two cases:
\begin{enumerate}
    \item $P^1_\epsilon=\{b_k:\ k\in\mathbb{N}$ such that $r-|c|-\epsilon \leq |b_k| \leq r+|c|+\epsilon\}$
    \item $P^2_\epsilon=\{b_k:\ k\in\mathbb{N}$ such that $b_k\not\in P^1_\epsilon\}$
\end{enumerate}
Similarly we divide the sum in the 3rd term into two cases:
\begin{enumerate}
    \item $Z^1_\epsilon=\{a_k:\ k\in\mathbb{N}$ such that $|a_k|\geq s-\epsilon\}$
    \item $Z^2_\epsilon=\{a_k:\ k\in\mathbb{N}$ such that $a_k\not\in Z^1_\epsilon\}$
\end{enumerate}

Let us estimate the contribution to term 4 of the poles in $P^2_\epsilon$. We have the following estimate for the maximum of the sum
\begin{align*}
    &\max_{t\in[0,1]}\sum_{\substack{|b_k|<s\\b_k\in P^2_\epsilon}}\log\left|\frac{re^{i\theta}-b_k}{re^{i\theta}+tc-b_k}\right|
    \leq\frac{1}{\delta}\sum_{\substack{|b_k|<s\\b_k\in P^2_\epsilon}}\log\left(1+\max_{t\in[0,1]}\left|\frac{tc}{re^{i\theta}+tc-b_k}\right|^\delta\right)\\[6pt]
    \leq&\frac{|c|^\delta}{\delta}\sum_{\substack{|b_k|<s\\b_k\in P^2_\epsilon}}\max_{t\in[0,1]}\left|\frac{1}{re^{i\theta}+tc-b_k}\right|^\delta
    \leq\frac{|c|^\delta}{\delta}\sum_{\substack{|b_k|<s\\b_k\in P^2_\epsilon}}\max_{t\in\overline{\mathbb{D}}}\left|\frac{1}{re^{i\theta}+tc-b_k}\right|^\delta\\[6pt]
    \leq&\frac{|c|^\delta}{\delta}\sum_{\substack{|b_k|<s\\b_k\in P^2_\epsilon}}\left|\frac{1}{re^{i\theta}+|c|\frac{b_k-re^{i\theta}}{|b_k-re^{i\theta}|}-b_k}\right|^\delta,
\end{align*}
since $re^{i\theta}+tc$ lies inside the closed annulus $r-|c| \leq |z| \leq r+|c|$ for every $t\in\overline{\mathbb{D}}$ for all large enough $r$ and $b_k\in P^2_\epsilon$ lies outside that annulus, so that the optimal choice of $t$ is the unit vector in the direction of $b_k$ from $re^{i\theta}$. Further, since $b_k$ lies outside the annulus we have the estimate
\begin{align*}
    &\left|re^{i\theta}+|c|\frac{b_k-re^{i\theta}}{|b_k-re^{i\theta}|}-b_k\right| 
    = \left|1-\frac{|c|}{|b_k-re^{i\theta}|}\right|\left|re^{i\theta}-b_k\right|\\[6pt]
    &\quad\geq\left(1-\frac{|c|}{|c|+\epsilon}\right)\left|re^{i\theta}-b_k\right|,
\end{align*}
so that in total we obtain the estimate
\begin{equation}
    \label{eq:p2_estimate}
    \begin{split}
    &\int_0^{2\pi}\max_{t\in[0,1]}\sum_{\substack{|b_k|<s\\b_k\in P^2_\epsilon}}\log\left|\frac{re^{i\theta}-b_k}{re^{i\theta}+tc-b_k}\right|\frac{d\theta}{2\pi}\\[6pt]
    \leq&\frac{|c|^\delta}{\delta}\left(\frac{|c|+\epsilon}{\epsilon}\right)^{\delta}\sum_{\substack{|b_k|<s\\b_k\in P^2_\epsilon}}\int_0^{2\pi}\frac{1}{|re^{i\theta}-b_k|^\delta}\frac{d\theta}{2\pi}\\[6pt]
    \leq&\left(\frac{|c|+\epsilon}{\epsilon}\right)^{\delta}\frac{|c|^\delta}{\delta(1-\delta)}\frac{1}{r^\delta}n(s,g')\\[6pt]
    \leq&\left(\frac{|c|+\epsilon}{\epsilon}\right)^{\delta}\frac{2|c|^\delta}{\delta(1-\delta)}\frac{\alpha}{\alpha-1}\frac{1}{r^\delta}N(\alpha(r+|c|),g')
    \end{split}
\end{equation}   

Using the same reasoning to estimate the contribution to term 3 of the zeros in $Z^2_\epsilon$, we obtain the following upper bound
\begin{equation}
    \label{eq:z2_estimate}
    \begin{split}
    &\int_0^{2\pi}\max_{t\in[0,1]}\sum_{\substack{|a_k|<s\\a_k\in Z^2_\epsilon}}\log\left|\frac{s^2-\overline{a_k}re^{i\theta}}{s^2-\overline{a_k}(re^{i\theta}+tc)}\right|\frac{d\theta}{2\pi}\\[6pt]
    \leq&\frac{|c|^\delta}{\delta}\left(\frac{|c|+\epsilon}{\epsilon}\right)^{\delta}\sum_{\substack{|a_k|<s\\a_k\in Z^2_\epsilon}}\int_0^{2\pi}\frac{1}{|re^{i\theta}-s^2/\overline{a_k}|^\delta}\frac{d\theta}{2\pi}\\[6pt]
    \leq&\left(\frac{|c|+\epsilon}{\epsilon}\right)^{\delta}\frac{2|c|^\delta}{\delta(1-\delta)}\frac{\alpha}{\alpha-1}\frac{1}{r^\delta}N(\alpha(r+|c|),1/g'),
    \end{split}
\end{equation} 
because our assumption $|a_k|<s-\epsilon$ for points $a_k\in Z^2_\epsilon$ implies that $s^2/|\overline{a_k}|>r+|c|+\epsilon$, thus allowing us to use the same reasoning that we used for term 4.

This leaves us with the task of estimating the contributions of $P^1_\epsilon$ and $Z^1_\epsilon$, which turn out to be significant. For both cases we must estimate integrals of the form
$$\int_0^{2\pi}\max_{t\in[0,1]}\frac{1}{|re^{i\theta}+tc-c_k|^{\delta_k}}\frac{d\theta}{2\pi},$$
where $c_k$ is either one of the $b_k$ or $s^2/\overline{a_k}$, and where $\delta_k\in(0,1)$ can be chosen depending on $c_k$. We prove and use the following lemma to handle this integral.

\begin{lemma}

\label{lemma:p1_z1_estimate}

Let $\ 0<\epsilon,\delta<1$. Then there exists a constant $R_{\epsilon,c,\alpha}>0$ that depends on $\epsilon,\ c$ and the function $\alpha$, such that for every $r>R_{\epsilon,c,\alpha}$ 
    $$\int_0^{2\pi}\max_{t\in\mathbb{R}}\frac{1}{|re^{i\theta}+tc-c_k|^{\delta/2}}\frac{d\theta}{2\pi}
    \leq \frac{1}{1-\delta}\frac{1}{r^{\delta/2}}$$
for any $c_k\in\mathbb{C}$ and 
$$\int_0^{2\pi}\max_{t\in\mathbb{R}}\frac{1}{|re^{i\theta}+tc-c_k|^{\delta}}\frac{d\theta}{2\pi}
\leq\left(\frac{2}{\sqrt{\epsilon(2-\epsilon)}}\right)^\delta\frac{1}{1-\delta}\frac{1}{r^{\delta}}$$
for any $c_k\in P^1_\epsilon\cap D(0,s)$ and for any $c_k\in\{s^2/\overline{a_k}: a_k\in Z^1_\epsilon\cap D(0,s)\}$ with the property
$$\left|\sin\left(\text{arg}\ \frac{c_k}{c}\right)\right|<1-\sqrt{\epsilon}.$$

\begin{proof}
We seek a lower bound for the minimum over $t\in\mathbb{R}$ of the length of the line segment $[re^{i\theta}+tc, c_k]$. To this end, we orthogonally project the line segment to the 1-dimensional subspace spanned by $ic$. The geometric situation is then as presented in the Figure~\ref{fig:orthogonalProject}.
\begin{figure}[H]
    \centering{

    \begin{tikzpicture}[scale=0.85,>=stealth,
    levea/.style={line width=1.5pt}
    ]
    
    \path[clip] (-5.2,-5.2) rectangle (10,5.2);
    \begin{scope}[rotate=-37]
    \draw[levea,dashed,green!50!black] (-10,0) coordinate (ov)--(10,0);
    \draw[dashed] (-10,5)--(30,5);
    \draw[dashed,blue] (0,-10)--(0,10);
    \draw[dashed,red] (-10,-10)--(10,10);
    \draw (0,0) circle (4cm);
    
    \draw (0,0) coordinate (o);
    \draw (67:4) coordinate (A);
    \draw (5,5) coordinate (ck);
    \draw (5,0) coordinate (dk);
    \draw (0,5) coordinate (cu);
    \draw (-1,5) coordinate (cuv);
    \draw (0,6) coordinate (cuu);
    \draw (0,4.5) coordinate (c);
    
    \path[name path=line 1] (-10,0) -- (10,0);
    \path[name path=line 2] (A) --++(0,-10);
    \fill[red,name intersections={of=line 1 and line 2}] (intersection-1) coordinate (B);
    

    \tkzMarkRightAngle[size=0.4](ov,o,c);
    \tkzMarkRightAngle[size=0.4](ov,B,A);
    \tkzMarkRightAngle[size=0.4](ov,dk,ck);
    \tkzMarkRightAngle[size=0.4](cuv,cu,cuu);

    \draw[dashed] (A)--++(0,10);
    \draw[dashed] (B)--++(0,-10);
    \draw[levea,blue!50!red] (o)--(ck);
    \draw[levea,blue] (o)--(c);
    \draw[levea,red] (A)--(ck);
    
    \draw[dashed] (ck)--++(0,10);
    \draw[dashed] (dk)--++(0,-10);


    \draw[levea,red,dashed,->,shorten >=2pt] (ck)--(dk);

    \filldraw (B) circle (2pt) node[right=1mm,fill=white, fill opacity=0.8, text opacity=1]{$B=r\sin(\text{arg}(c)-\theta)\dfrac{-ic}{|c|}$};

    \draw[levea,red,dashed,->,shorten >=2pt] (A)--(B);
    \draw[levea,green!50!black] (B)--(dk);
    
    \filldraw (A) circle (2pt) node[above right=1.5mm,fill=white]{$A=re^{i\theta}$};
    \filldraw (ck) circle (2pt) node[above=2mm]{$c_k$};
    \filldraw (dk) circle (2pt) node[right=1mm]{$d_k=|c_k|\sin\left(\text{arg}\frac{c}{c_k}\right)\frac{-ic}{|c|}$};
    \filldraw (c) circle (2pt) node[right]{$c$};
    
    \end{scope}
    
    \draw (-6,0)--(13,0);
    \draw (0,-6)--(0,6);
    
    \end{tikzpicture}
    \caption{Here $A=re^{i\theta}$ and $B$ is the orthogonal projection of $A$. Similarly $d_k$ is the orthogonal projection of $c_k$.}
    }
    \label{fig:orthogonalProject}
\end{figure}
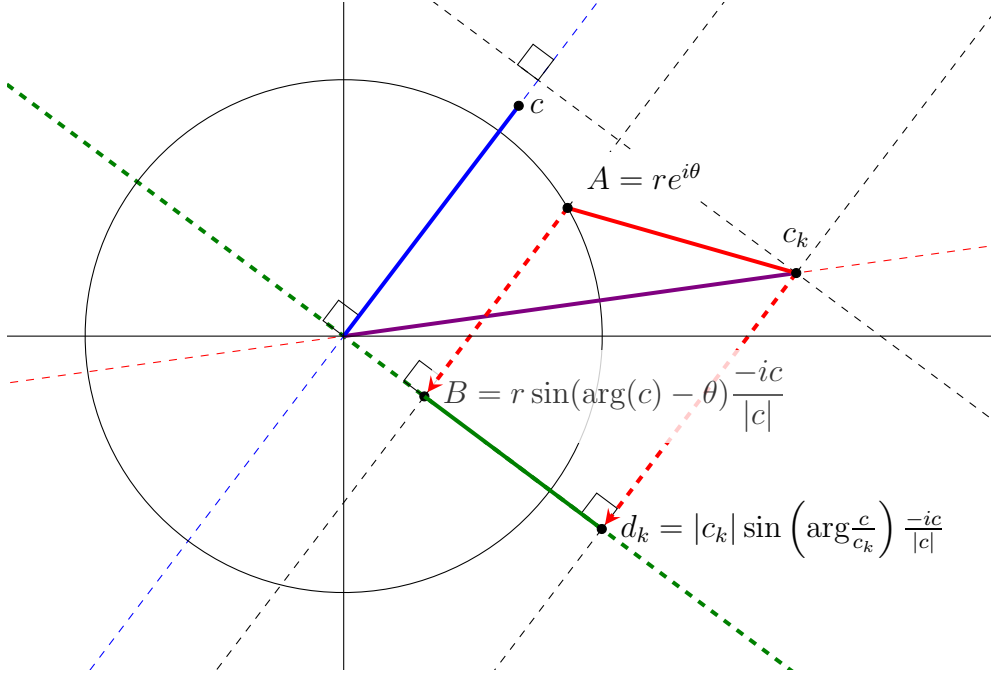

Then 
$$d_k = |c_k|\sin[\text{arg}(c)-\text{arg}(c_k)]\dfrac{-ic}{|c|}$$
is the projection of the end-point $c_k$ and 
$$B=r\sin[\text{arg}(c)-\theta]\dfrac{-ic}{|c|}$$ 
is the projection of $re^{i\theta}+tc$ for any $t\in\mathbb{R}$. Now the length of the line-segment $[B,d_k]$ is less than that of the original segment. Thus, by a simplifying change of variables, we obtain the estimate
\begin{equation}
    \label{eq:projective_estimate}
    \int_0^{2\pi}\max_{t\in[0,1]}\frac{1}{|re^{i\theta}+tc-c_k|^{\delta_k}}\frac{d\theta}{2\pi}
    \leq \frac{1}{r^{\delta_k}}\int_0^{2\pi}\frac{1}{|\cos\theta-|d_k|/r|^{\delta_k}}\frac{d\theta}{2\pi}.
\end{equation}
We have the following quadratic lower bounds
\begin{equation}
    \label{eq:quadratic_bound}
    |\cos\theta - \beta| 
    \geq 
    \left \{ 
    \begin{split}
        \frac{|1-\hat{\beta}|}{(\cos^{-1}(\hat{\beta}))^2}(\theta - \cos^{-1}(\hat{\beta}))^2, \quad \text{if }\theta\in[0,\cos^{-1}(\hat{\beta})]\\[6pt]
        \frac{|1+\hat{\beta}|}{(\pi-\cos^{-1}(\hat{\beta}))^2}(\theta - \cos^{-1}(\hat{\beta}))^2, \quad \text{if } \theta\in[\cos^{-1}(\hat{\beta}),\pi]
    \end{split}
    \right.
\end{equation}
where $\hat{\beta}=\min\{1,\max\{-1, \beta\}\}$ for fixed $\beta\in\mathbb{R}$, and we have the following linear lower bounds
\begin{equation}
    \label{eq:linear_bound}
    |\cos\theta - \beta| > \frac{\sqrt{1-\beta^2}}{2}|\theta-\cos^{-1}\beta|, \text{ for all } \theta\in[0,\pi]
\end{equation}
when $|\beta| < 1$. Note that $|\cos\theta - \beta|$ is $2\pi$-periodic and even about the $\theta=\pi$ axis, so these bounds can be repeated ad infinitum.

Substituting the quadratic bounds \eqref{eq:quadratic_bound} in \eqref{eq:projective_estimate}, and setting $\delta_k=\delta/2<1/2$ (due to the quadratic bound, the resulting integral does not converge unless $\delta_k<1/2$) we obtain the first assertion of this lemma. 

For the second assertion, we note that there exists large enough $R>0$ such that, for all $r>R$ we have $|c_k|/r<1+\sqrt{\epsilon}$ due to our assumptions on $c_k$, as we recall that $s=(\alpha+1)(r+|c|)/2$ and $\alpha\to 1$ as $r\to\infty$. In addition, by hypothesis
$$\left|\sin\left(\text{arg}\ \frac{c_k}{c}\right)\right|<1-\sqrt{\epsilon}$$
so we obtain
$$\frac{|d_k|}{r}=\frac{|c_k|}{r}|\sin(\text{arg}\ c_k - \text{arg}\ c)|< 1 - \epsilon.$$
Substituting the inequality above into \eqref{eq:linear_bound} with $\beta=|d_k|/r$ and substituting the resulting linear bounds into \eqref{eq:projective_estimate} and setting $\delta_k=\delta$ (for the linear bounds the resulting integral converges if $\delta_k<1$) yields the uniform bound of the second assertion.
\end{proof}
\end{lemma}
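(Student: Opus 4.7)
The plan is to reduce the two-dimensional optimization over $\theta$ and $t$ to a one-dimensional integral by exploiting the geometric fact that $\{re^{i\theta}+tc : t\in\mathbb{R}\}$ is a line parallel to $c$. The minimum distance from this line to the point $c_k$ is precisely the length of the perpendicular from $c_k$ to the line, which can be computed by orthogonally projecting both $re^{i\theta}$ and $c_k$ onto the direction $-ic/|c|$. Writing $B=r\sin(\arg c-\theta)(-ic/|c|)$ and $d_k=|c_k|\sin(\arg c-\arg c_k)(-ic/|c|)$ for these projections, I would obtain the pointwise estimate $\min_{t\in\mathbb{R}}|re^{i\theta}+tc-c_k|\geq |B-d_k|$, and after a rotation of the integration variable this reduces the problem to estimating
\begin{equation*}
\frac{1}{r^{\delta_k}}\int_0^{2\pi}\frac{1}{|\cos\theta-|d_k|/r|^{\delta_k}}\frac{d\theta}{2\pi}
\end{equation*}
for an appropriate choice of the exponent $\delta_k$.

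For the first assertion, which must hold for arbitrary $c_k\in\mathbb{C}$, I must allow for the worst case that $|d_k|/r$ is close to (or exceeds) $1$, in which case $\cos\theta-|d_k|/r$ may have a double zero. Near such a zero the sharpest available lower bound is quadratic, of the form $|\cos\theta-\beta|\gtrsim(\theta-\theta_0)^2$, whose reciprocal power integrates only for $\delta_k<1/2$. This forces the choice $\delta_k=\delta/2$, and a direct integration, combined with the $2\pi$-periodicity and evenness of $|\cos\theta-\beta|$ about $\theta=\pi$, yields the claimed bound $\tfrac{1}{1-\delta}r^{-\delta/2}$, valid for all sufficiently large $r$.

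For the second assertion, the angular hypothesis $|\sin(\arg(c_k/c))|<1-\sqrt{\epsilon}$ is the key input: combined with the fact that $|c_k|/r$ is uniformly bounded for large $r$ (for $c_k\in P^1_\epsilon$ this follows immediately from $|c_k|\leq r+|c|+\epsilon$, and for $c_k=s^2/\overline{a_k}$ with $a_k\in Z^1_\epsilon$ the inequality $|a_k|\geq s-\epsilon$ together with $s=\tfrac{\alpha+1}{2}(r+|c|)$ and $\alpha(r)\to 1$ supplies the comparable control), it forces $|d_k|/r<1-\epsilon$ once $r$ is large enough. Then $\cos\theta-|d_k|/r$ has only simple zeros, and the linear lower bound $|\cos\theta-\beta|\geq\tfrac{\sqrt{1-\beta^2}}{2}|\theta-\cos^{-1}\beta|$ applies, whose slope is uniformly at least $\tfrac{\sqrt{\epsilon(2-\epsilon)}}{2}$. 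This produces an integrable singularity for the full range $\delta_k=\delta<1$ and accounts for the constant $\bigl(2/\sqrt{\epsilon(2-\epsilon)}\bigr)^{\delta}$ in the statement.

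The main obstacle I anticipate is bookkeeping in the second assertion: one must verify $|d_k|/r<1-\epsilon$ \emph{uniformly} over the two families of admissible $c_k$ and over $r$ large enough, which is where $R_{\epsilon,c,\alpha}$ enters, since the control depends on $\alpha(r)\to 1$ and hence on the function $\alpha$ itself. Once this uniformity is in hand, both estimates follow from essentially elementary integration of explicit one-variable singular integrals; the geometric projection step is the conceptual core of the argument, and the rest is calculus.
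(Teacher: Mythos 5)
Your proposal follows the paper's proof essentially verbatim: the same orthogonal projection onto the direction $-ic/|c|$ reducing the problem to the one-variable integral of $|\cos\theta-|d_k|/r|^{-\delta_k}$, the same quadratic lower bound forcing $\delta_k=\delta/2$ for arbitrary $c_k$, and the same use of the angular hypothesis together with $|c_k|/r<1+\sqrt{\epsilon}$ to get $|d_k|/r<1-\epsilon$, enabling the linear bound with slope $\sqrt{\epsilon(2-\epsilon)}/2$ and exponent $\delta_k=\delta$. The argument is correct and matches the paper's.
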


Using Lemma \ref{lemma:p1_z1_estimate} we obtain the following estimates for the $Z^1_\epsilon$ and $P^1_\epsilon$ parts of terms 3 and 4 

\begin{equation}
    \label{eq:Z1_estimate1}
    \begin{split}
    &\int_0^{2\pi}\max_{t\in[0,1]}\sum_{\substack{|a_k|<s\\a_k\in Z^1_\epsilon}}\log\left|\frac{s^2-\overline{a_k}re^{i\theta}}{s^2-\overline{a_k}(re^{i\theta}+tc)}\right|\frac{d\theta}{2\pi}\\[6pt]
    &\leq \frac{2|c|^{\delta/2}}{\delta}\frac{1}{1-\delta}\frac{1}{r^{\delta/2}}\left(n_{\angle\epsilon,c}(s, 1/g')-n_{\angle\epsilon,c}(s-\epsilon, 1/g')\right)\\[6pt]
    &\quad+\frac{|c|^\delta}{\delta}\left(\frac{2}{\sqrt{\epsilon(2-\epsilon)}}\right)^\delta\frac{1}{1-\delta}\frac{1}{r^{\delta}}\left(n(s, 1/g')-n(s-\epsilon, 1/g')\right)
    \end{split}
\end{equation}

and

\begin{equation}
    \label{eq:P1_estimate1}
    \begin{split}
    &\int_0^{2\pi}\max_{t\in[0,1]}\sum_{\substack{|b_k|<s\\b_k\in P^1_\epsilon}}\log\left|\frac{re^{i\theta}-b_k}{re^{i\theta}+tc-b_k}\right|\frac{d\theta}{2\pi}\\[6pt]
    &\leq \frac{2|c|^{\delta/2}}{\delta}\frac{1}{1-\delta}\frac{1}{r^{\delta/2}}\left(n_{\angle\epsilon,c}(r+|c|+\epsilon, g')-n_{\angle\epsilon,c}(r-|c|-\epsilon, g')\right)\\[6pt]
    &\quad+\frac{|c|^\delta}{\delta}\left(\frac{2}{\sqrt{\epsilon(2-\epsilon)}}\right)^\delta\frac{1}{1-\delta}\frac{1}{r^{\delta}}\left(n(r+|c|+\epsilon, g')-n(r-|c|-\epsilon, g')\right),
    \end{split}
\end{equation}
where we extend our exceptional set to include the bounded exceptional sets from our application of Lemma \ref{lemma:p1_z1_estimate}. By collecting all the estimates thus far, we arrive at the following intermediate result, which is useful in and of itself.

\begin{lemma}
Let $c\in\mathbb{C}\setminus\{0\}$, $a\in\mathbb{C}$ and let $f$ be a meromorphic function of hyper-order $<1$ that is not $c$-periodic. Then for $\epsilon\in(0,1)$, we have for all large enough $r>0$
    \begin{equation}
    \label{eq:no_exceptionalsets}
        \begin{split}
            &m\left(r,\frac{\Delta_c f-ac}{f'-a}\right)\leq K \left(\frac{\alpha+1}{\alpha-1}\right)\biggl[\frac{T(\alpha(r+|c|),f')}{r^{\delta}}\\&\quad+\left(R_{\epsilon,c}(\alpha(r+|c|),f') + R_{\epsilon,c}\left(\alpha(r+|c|),\frac{1}{f'-a}\right)\right)\frac{T(\alpha(r+|c|),f')}{r^{\delta/2}}\biggl],
        \end{split}
    \end{equation}
where $\alpha>1$ can either be a sufficiently small constant (how small depends only on $\epsilon$ and $c$) or $\alpha(r)>1$ and $\alpha(r)\to 1$ as $r\to\infty$, and where $K>0$ is some constant that depends on $\epsilon, \delta,\ f$ and $c$.
\end{lemma}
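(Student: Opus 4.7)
The plan is to collect the term-by-term estimates \eqref{eq:term1_estimate}, \eqref{eq:term2_estimate}, \eqref{eq:term5_estimate}, \eqref{eq:p2_estimate}, \eqref{eq:z2_estimate}, \eqref{eq:Z1_estimate1} and \eqref{eq:P1_estimate1} already derived into a single inequality, and to rewrite the angular counting pieces in terms of $R_{\epsilon,c}$. Starting from the Poisson--Jensen decomposition \eqref{eq:poisson_jensen}, the proximity function on the left is bounded, up to the harmless $\log|c|$, by the sum of the five integral terms, each of which has already been estimated individually; what remains is essentially bookkeeping.

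I would then group the resulting contributions by their decay rate in $r$. The $1/r^{\delta}$ pieces come from terms $1$, $2$ and $5$, from the $P^2_\epsilon$ and $Z^2_\epsilon$ parts of terms $4$ and $3$, and from the non-angular summands of \eqref{eq:Z1_estimate1}--\eqref{eq:P1_estimate1}. On these I would apply $N(\rho,1/g')\leq T(\rho,g')+O(1)$ from the First Fundamental Theorem together with $N(\rho,g')\leq T(\rho,g')$, and convert the bare counting functions $n$ appearing in \eqref{eq:Z1_estimate1}--\eqref{eq:P1_estimate1} into $N$ at a slightly inflated radius via $n(\rho,h)\log\alpha'\leq N(\alpha'\rho,h)$. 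Absorbing the $\epsilon,c,\delta$-dependent prefactors into a single constant $K$, all of these contributions collapse into a single summand of the form $K\cdot(\alpha+1)/(\alpha-1)\cdot T(\alpha(r+|c|),g')/r^{\delta}$. The $1/r^{\delta/2}$ pieces, which come only from the angular summands of \eqref{eq:Z1_estimate1}--\eqref{eq:P1_estimate1}, are then handled via the defining identity $n_{\angle\epsilon,c}(\rho,h)=R_{\epsilon,c}(\rho,h)\cdot n(\rho,h)$ followed by the same $n\to N\to T$ conversion, which yields the second bracketed summand of the claim; any mismatch between the intermediate radius $s=(\alpha+1)(r+|c|)/2$ and the target radius $\alpha(r+|c|)$ is absorbed by relabeling the inner $\alpha$, which is legitimate both when $\alpha$ is constant and when $\alpha(r)\to 1$.

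Finally I would replace $g'$ by $f'$: since $g'$ differs from $f'-a$ only by an additive constant, $T(\rho,g')$ and the relevant counting functions change by at most $O(1)$, absorbed into $K$, and the zeros and poles of $g'$ are exactly the $a$-points and poles of $f'$, so the $R_{\epsilon,c}$ factors become those of $f'$ and $1/(f'-a)$ as claimed. The bounded threshold $r>R_{\epsilon,c,\alpha}$ from Lemma~\ref{lemma:p1_z1_estimate} becomes the ``for all large enough $r$'' of the statement. The main obstacle is purely clerical: keeping track of the several radii ($r$, $s$, $\alpha(r+|c|)$) and of the explicit $\alpha$-dependence across a dozen intermediate bounds, while ensuring that the $n\to N$ inflations never push one outside the regimes where Lemma~\ref{lemma:p1_z1_estimate} and the earlier estimates apply. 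No new analytic input beyond the estimates already in the excerpt is required.
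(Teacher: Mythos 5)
Your proposal is correct and matches the paper's own (largely implicit) argument: the lemma is stated there precisely as the result of "collecting all the estimates thus far," i.e.\ \eqref{eq:term1_estimate}, \eqref{eq:term2_estimate}, \eqref{eq:term5_estimate}, \eqref{eq:p2_estimate}, \eqref{eq:z2_estimate}, \eqref{eq:Z1_estimate1} and \eqref{eq:P1_estimate1}, followed by exactly the $n\to N\to T$ conversions, the rewriting of $n_{\angle\epsilon,c}$ via $R_{\epsilon,c}\cdot n$, and the identification of $g'$ with $f'-a$ that you describe. Your handling of the radius mismatch between $s$ and $\alpha(r+|c|)$ by relabeling $\alpha$ is the right (and in fact slightly more careful than the paper's) way to close that bookkeeping step.
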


By further extending our exceptional sets, we can improve the results and get rid of the shift and scaling in the parameter of $T$ in the above result. Whereas in the estimates \eqref{eq:term2_estimate}, \eqref{eq:term5_estimate}, \eqref{eq:p2_estimate} and \eqref{eq:z2_estimate}, where we used conventional methods of estimating the unintegrated counting functions $n$ by integrated counting functions $N$, for the the estimates \eqref{eq:P1_estimate1} and \eqref{eq:Z1_estimate1} we gain extra leverage out of the assumption on the hyper-order of $f$. 

We may apply Lemma \ref{lemma:hyperorder_shift_estimate} to the unintegrated counting functions $n(\cdot,g')$ and $n(\cdot,1/g')$ (and similarly to $n_{\angle\epsilon,c}(\cdot,g')$ and $n_{\angle\epsilon,c}(\cdot,1/g')$), since they are increasing functions that can be continuously approximated up to arbitrary precision, and since their hyper-orders are less than or equal to the hyper-order of $g'$, which is less than $1$ by hypothesis:

\begin{align*}
    &\limsup_{r\to\infty}\frac{\log\log n(r,g')}{\log r}\leq\limsup_{r\to\infty}\frac{\log\frac{1}{\log (1+1/r^2)}N(r+1/r,g')}{\log r}\\
    &\leq\limsup_{r\to\infty}\frac{\log\left(2\log r+\log T(r+1/r,g')\right)}{\log r}\\
    &=\limsup_{r\to\infty}\frac{\log\log T(r+1/r,g') + \log\left(1 + \dfrac{2\log r}{\log T(r+1/r,g')}\right)}{\log r}\\
    &=\limsup_{r\to\infty}\frac{\log\log T(r+1/r,g')}{\log r}+\limsup_{r\to\infty}\frac{2\log r}{\log T(r+1/r,g')}\\
    &\leq\left(\lim_{r\to\infty}\frac{\log(r+1/r)}{\log r}\right)\left(\limsup_{r\to\infty}\frac{\log\log T(r+1/r,g')}{\log(r+1/r)}\right)\leq\xi.
\end{align*}
Thus, we apply Lemma \ref{lemma:hyperorder_shift_estimate} and obtain the following estimates
$$n(s,1/g')-n(s-\epsilon,1/g')\leq C_1\frac{n(s,1/g')}{r^{1-\xi-\lambda}},$$
and
$$n(r+|c|+\epsilon,g')-n(r-|c|-\epsilon,g')\leq C_2\frac{n(s,g')}{r^{1-\xi-\lambda}}$$
where $C_1,C_2>0$ and $\lambda\in(0, 1-\xi)$ are constants. Substituting these in \eqref{eq:Z1_estimate1} and \eqref{eq:P1_estimate1} yields the following estimates outside some exceptional set of finite logarithmic measure that depends on $\epsilon, \lambda$ and $g$:
\begin{equation}
    \label{eq:Z1_estimate2}
    \begin{split}
    &\int_0^{2\pi}\max_{t\in[0,1]}\sum_{\substack{|a_k|<s\\a_k\in Z^1_\epsilon}}\log\left|\frac{s^2-\overline{a_k}re^{i\theta}}{s^2-\overline{a_k}(re^{i\theta}+tc)}\right|\frac{d\theta}{2\pi}\\
    &\quad\leq C_3\frac{\alpha}{\alpha -1}\biggl(\frac{1}{r^\delta}+\frac{1}{r^{1+\delta/2-\xi-\lambda}}\frac{n_{\angle\epsilon,c}(s, 1/g')}{n(s, 1/g')}\biggl)N(\alpha(r+|c|),1/g')
    \end{split}
\end{equation}
and 
\begin{equation}
    \label{eq:P1_estimate2}
    \begin{split}
    &\int_0^{2\pi}\max_{t\in[0,1]}\sum_{\substack{|b_k|<s\\b_k\in P^1_\epsilon}}\log\left|\frac{re^{i\theta}-b_k}{re^{i\theta}+tc-b_k}\right|\frac{d\theta}{2\pi}\\
    &\quad\leq C_3\frac{\alpha}{\alpha -1}\biggl(\frac{1}{r^\delta}+\frac{1}{r^{1+\delta/2-\xi-\lambda}}\frac{n_{\angle\epsilon,c}(s, g')}{n(s, g')}\biggl)N(\alpha(r+|c|),g')
    \end{split}
\end{equation}
where $C_3$ is a sufficiently large constant. Including the resulting exceptional sets in our overall exceptional set, which is still of finite logarithmic measure, we collect the estimates \eqref{eq:term2_estimate}, \eqref{eq:term5_estimate}, \eqref{eq:p2_estimate}, \eqref{eq:z2_estimate}, \eqref{eq:Z1_estimate2} and \eqref{eq:P1_estimate2}, to obtain

\begin{equation}
    \label{eq:antepenultimate}
    \begin{split}
        m\left(r,\frac{\Delta_c g(re^{i\theta})}{g'(re^{i\theta})}\right)
        \leq C_4 \frac{\alpha+1}{\alpha-1}\biggl(&\frac{T(\alpha(r+|c|),g')}{r^\delta}
        +R_{\epsilon,c}(s,g')\frac{N(\alpha(r+|c|),g')}{r^{1+\delta/2-\xi-\lambda}}\\
        &+R_{\epsilon,c}\left(s,\frac{1}{g'}\right)\frac{N\left(\alpha(r+|c|),\frac{1}{g'}\right)}{r^{1+\delta/2-\xi-\lambda}}\biggl),
    \end{split}
\end{equation}
where $C_4$ is a sufficiently large constant. The rest of the proof borrows heavily from the proof of Theorem 5.1 in \cite{halburdkt:14TAMS}. With the choice
\begin{equation}
    \label{eq:alpha}
    \alpha=1+\frac{r+|c|}{(r+|c|)(\log T(r+|c|,g'))^{1+\lambda}},
\end{equation}
we seek to rid of the coefficient $\alpha$ in our arguments in \eqref{eq:antepenultimate}: we have the following estimate outside an exceptional set of finite logarithmic measure
\begin{equation}
    \label{eq:growth_lemma}
    T(\alpha(r+|c|),g')\leq CT(r+|c|,g')
\end{equation}
by \cite[Lemma 3.3.1]{cherryy:01}, and the same choice of $\alpha$ can be used to make similar estimates for the counting funtion terms $N$. Setting $\delta=1-\lambda$ in \eqref{eq:antepenultimate}, using \eqref{eq:growth_lemma} and estimating the terms involving $\alpha$ by \eqref{eq:alpha} and the fact that the hyper-order of $g'=f'-a$ is less than $1$, all together yields
\begin{equation}
    \label{eq:penultimate_estimate}
    \begin{split}
        &m\left(r,\frac{\Delta_c f(re^{i\theta})-ac}{f'(re^{i\theta})-a}\right)
        = O\left(\frac{T(r+|c|,f')}{r^{1-\xi-\lambda(1+\xi)}}\right)\\[6pt]
        &\quad+O\left(R_{\epsilon,c}(r+|c|,f')\frac{N\left(r+|c|,f'\right)}{r^{3/2-2\xi-\lambda(3/2+\xi)}}+R_{\epsilon,c}\left(r+|c|,\frac{1}{f'-a}\right)\frac{N\left(r+|c|,\frac{1}{f'-a}\right)}{r^{3/2-2\xi-\lambda(3/2+\xi)}}\right),
    \end{split}
\end{equation}
as $r\to\infty$. Further, by Lemma \ref{lemma:hyperorder_shift_estimate} we may also get rid of the shift in argument by $c$ in \eqref{eq:penultimate_estimate}: 
$$T(r+|c|,f')=T(r,f')+o\left(\frac{T(r,f')}{r^{1-\xi-\lambda}}\right),$$ 
and we similarly handle the shift in the remaining counting function terms in \eqref{eq:penultimate_estimate} (including the ones in $R_{\epsilon,c}$-terms). The assertion of the theorem is obtained by setting $\lambda=\epsilon/3$ and by including all the aforementioned exceptional sets of finite logarithmic measure in our final exceptional set. \qed

\section{Consequences}

\subsection{Deficiencies of meromorphic functions}
\label{defects}

\begin{proposition}
\label{prop:defect_inequality}
Let $c\in\mathbb{C}\setminus\{0\}$ and let $f$ be a meromorphic function of finite order $\rho<\infty$ that is not $c$-periodic. If the lower order $\mu(f')$ of $f'$ satisfies $\mu(f')>\rho(f)-1/2$, then
$$\delta(a,f')\leq \left(1+\limsup_{r\to\infty}\frac{N(r,f)}{T(r,f')}\right)\delta(ac,\Delta_c f).$$
In particular when $f$ is an entire function, we have
\begin{equation}
    \label{eq:deficiencies}
    \delta(a,f')\leq\delta\left(a,\frac{\Delta_c f}{c}\right).
\end{equation}
\end{proposition}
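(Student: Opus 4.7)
The approach is to apply Theorem \ref{thm:ram_vs_pair} to transfer the deficiency statement from $f'$ to $\Delta_c f$. Because $f$ has finite order $\rho<\infty$, its hyper-order is $\xi=0<3/4$, so by the remarks following the theorem the error terms are all of class $S(r,f')$, giving
\begin{equation*}
    m\!\left(r,\frac{\Delta_c f - ac}{f'-a}\right) = S(r,f')
\end{equation*}
as $r\to\infty$ outside an exceptional set $E$ of finite logarithmic measure. Factoring $\frac{1}{f'-a}=\frac{1}{\Delta_c f - ac}\cdot\frac{\Delta_c f - ac}{f'-a}$ and using the subadditivity of $\log^+$ then yields the pointwise bound $m(r,1/(f'-a))\leq m(r,1/(\Delta_c f - ac))+S(r,f')$.

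The next step is to compare $T(r,\Delta_c f)$ with $T(r,f')$. For the proximity side, writing $\Delta_c f = f'\cdot(\Delta_c f/f')$ and applying Theorem \ref{thm:ram_vs_pair} with $a=0$ gives $m(r,\Delta_c f)\leq m(r,f')+S(r,f')$. For the counting side, the poles of $\Delta_c f(z)=f(z+c)-f(z)$ lie in the union of the pole sets of $f(z)$ and $f(z+c)$, and the finite-order shift estimate $N(r,f(z+c))=N(r,f)+S(r,f)$ yields $N(r,\Delta_c f)\leq 2N(r,f)+S(r,f)$. Combining these with $m(r,f')=T(r,f')-N(r,f')$ and the elementary inequality $N(r,f')=N(r,f)+\overline{N}(r,f)\geq N(r,f)$ produces the key comparison
\begin{equation*}
    T(r,\Delta_c f)\leq T(r,f')+N(r,f)+S(r,f'),
\end{equation*}
whence $\limsup_{r\to\infty}T(r,\Delta_c f)/T(r,f')\leq 1+\limsup_{r\to\infty}N(r,f)/T(r,f')$.

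To assemble the deficiency inequality, I would select a sequence $r_n\to\infty$ with $r_n\notin E$ along which the ratio $m(r_n,1/(\Delta_c f - ac))/T(r_n,\Delta_c f)$ converges to $\delta(ac,\Delta_c f)$, divide the bound from the first paragraph by $T(r_n,f')$, and split the right-hand side as the product $[m(r_n,1/(\Delta_c f - ac))/T(r_n,\Delta_c f)]\cdot[T(r_n,\Delta_c f)/T(r_n,f')]$. The elementary inequality $\liminf(A_nB_n)\leq\liminf A_n\cdot\limsup B_n$ for nonnegative sequences then gives
\begin{equation*}
    \delta(a,f')\leq \delta(ac,\Delta_c f)\cdot\limsup_{n\to\infty}\frac{T(r_n,\Delta_c f)}{T(r_n,f')}\leq\delta(ac,\Delta_c f)\left(1+\limsup_{r\to\infty}\frac{N(r,f)}{T(r,f')}\right),
\end{equation*}
which is the main inequality. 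The entire-function case is immediate: $N(r,f)\equiv 0$, and the identity $\delta(ac,\Delta_c f)=\delta(a,\Delta_c f/c)$ follows from $T(r,\Delta_c f/c)=T(r,\Delta_c f)+O(1)$ and $m(r,c/(\Delta_c f - ac))=m(r,1/(\Delta_c f - ac))+O(1)$. The main technical obstacle is producing the sequence $r_n$ in the complement of $E$ along which the liminf for $\delta(ac,\Delta_c f)$ is attained; this requires enough regularity of the Nevanlinna characteristic to transplant any limit-achieving sequence off $E$ without disturbing the ratio, and this is precisely where the hypothesis $\mu(f')>\rho(f)-1/2$ enters.
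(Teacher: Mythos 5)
Your argument follows the paper's route almost step for step — the factorization $m(r,1/(f'-a))\le m(r,1/(\Delta_c f-ac))+m\bigl(r,\tfrac{\Delta_c f-ac}{f'-a}\bigr)$, the comparison $T(r,\Delta_c f)\le T(r,f')+N(r,f)+o(T(r,f'))$ via $m(r,f')=T(r,f')-N(r,f)-\overline N(r,f)$ and the shift estimate for $N(r,f(z+c))$, and the final $\liminf$/$\limsup$ bookkeeping are all exactly what the paper does. The one place where you deviate is the one place where the proof actually has content: the exceptional set. You invoke Theorem~\ref{thm:ram_vs_pair}, whose conclusion holds only outside a set $E$ of finite logarithmic measure, and then you must run the $\liminf$ defining $\delta(ac,\Delta_c f)$ along a sequence that you have no right to assume avoids $E$. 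You flag this yourself and propose to ``transplant'' the limit-achieving sequence off $E$ using regularity coming from $\mu(f')>\rho(f)-1/2$, but you do not carry this out, and it is not clear it can be carried out: under that hypothesis $T(r,f')$ need not be doubling (one only gets $T(2r,f')/T(r,f')=O(r^{\rho-\mu+\epsilon})$, which can grow like $r^{1/2}$), and neither $m(r,1/(\Delta_c f-ac))$ nor the ratio defining the deficiency is monotone, so moving $r_n$ to a nearby point outside $E$ can in principle change the ratio by a bounded-away-from-zero amount. This is a genuine gap, not a routine omission.

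The paper avoids the problem by not using Theorem~\ref{thm:ram_vs_pair} at all here. It instead uses the intermediate estimate \eqref{eq:no_exceptionalsets}, which holds for \emph{all} sufficiently large $r$ with no exceptional set, at the cost of a dilated argument $\alpha(r+|c|)$ (with $\alpha>1$ a fixed constant) and a weaker power $r^{-\delta/2}=r^{-1/2+\epsilon}$. The hypothesis $\mu(f')>\rho(f)-1/2$ is then used precisely to absorb this: $T(\alpha(r+|c|),f')\le (\alpha(r+|c|))^{\rho+\epsilon}$ while $T(r,f')\ge r^{\mu-\epsilon}$ for all large $r$, so $T(\alpha(r+|c|),f')/r^{1/2-\epsilon}=O(T(r,f')/r^{\epsilon})$ everywhere, giving $A(a,r)=o(T(r,f'))$ unconditionally. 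If you rework your final paragraph to quote \eqref{eq:no_exceptionalsets} instead of Theorem~\ref{thm:ram_vs_pair} and insert this order/lower-order comparison, the rest of your argument goes through as written.
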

\begin{proof}
With the notation
\begin{equation}
    \label{eq:shorthand}
    A(a,r):=m\left(r,\frac{\Delta_c f-ac}{f'-a}\right)
\end{equation}
we have
$$m\left(r,\frac{1}{f'-a}\right)\leq m\left(r,\frac{1}{\Delta_c f-ac}\right)+A(a,r) \quad \text{and} \quad m(r,\Delta_c f)\leq m(r,f')+A(0,r).$$
Thus, for all large enough $r>0$
\begin{align*}
    &\frac{T(r,\Delta_c f)}{T(r,f')}\leq \frac{m(r,f')+N(r,f)+N(r,f(z+c))}{T(r,f')}+\frac{A(0,r)}{T(r,f')}\\[6pt]
    &\leq 1+\frac{N(r,f)}{T(r,f')}+\frac{O(r^{\rho(f')-1+\epsilon})+O(\log r)}{T(r,f')}+\frac{A(0,r)}{T(r,f')}.
\end{align*}
where we have used the following estimate \cite[Theorem 2.2]{chiangf:08}
$$N(r,f(z+c))=N(r,f)+O(r^{\lambda-1+\epsilon})+O(\log r)$$
where $\lambda$ is the exponent of convergence for the poles of $f$, and thus
\begin{align*}
    &\liminf_{r\to\infty}\frac{m\left(r,\frac{1}{f'-a}\right)}{T(r,f')}\leq\liminf_{r\to\infty}\left(\frac{m\left(r,\frac{1}{\Delta_c f-ac}\right)}{T(r,\Delta_c f)}\frac{T(r,\Delta_c f)}{T(r,f')}+\frac{A(a,r)}{T(r,f')}\right)\\&\leq\liminf_{r\to\infty}\biggl[\left(1+\frac{N(r,f)}{T(r,f')}+\frac{O(r^{\rho(f')-1+\epsilon})+O(\log r)+A(0,r)}{T(r,f')}\right)\\
    &\qquad\qquad\times\frac{m\left(r,\frac{1}{\Delta_c f-ac}\right)}{T(r,\Delta_c f)}+\frac{A(a,r)}{T(r,f')}\biggl].
\end{align*}
Now if $f$ satisfies $\mu(f')>\rho(f)-1/2$, then by setting $\delta=1-2\epsilon$ and by choosing a constant $\alpha>1$ close enough to $1$ in \eqref{eq:no_exceptionalsets}, we get for all large enough $r>0$
$$A(a,r)\leq\frac{3K_aT(\alpha(r+|c|),f')}{(\alpha-1)r^{1/2-\epsilon}}=O\left(\frac{T(r,f')}{r^\epsilon}\right),$$
where $\epsilon>0$ is a sufficiently small constant. A similar bound applies to $A(0,r)$. The first statement of the proposition follows from the properties of limit inferior and limit superior, and the second statement is an immediate consequence of the first.
\end{proof}

\begin{remark}
From \cite{kobayashi:76}, we have that if the zeros of an entire function $g$ lie on a sector whose size is less than $\pi/2$ and $1<\rho(g)<\infty$, then
\begin{equation}
    \label{eq:genus_vs_orders}
    \sigma(g)\leq\mu(g)\leq\rho(g)\leq\sigma(g)+1,
\end{equation}
where $\sigma(g)\in\mathbb{N}$ is the genus of $g$. Now if $\rho(g)$ is not an integer, then $\sigma(g)=\lfloor\rho(g)\rfloor$, so then also $\sigma(g')=\sigma(g)$, since $\rho(g')=\rho(g)$. If $\rho(g)$ is a positive integer, then either $\sigma(g)=\rho(g)$ or $\sigma(g)=\rho(g)-1$

If $f'$ has all its zeros on such a sector, $\rho(f)$ is not an integer, and the decimal part of $\rho(f)$ is less than $1/2$ or the decimal part of $\mu(f')$ is greater than $1/2$, then by the relation in \eqref{eq:genus_vs_orders} we see that $\mu(f')>\rho(f)-1/2$, which by the above investigations implies that $f$ satisfies \eqref{eq:deficiencies}.

Moreover, if the $a$-points of $f'$ all lie on a sector whose size is less than $\pi/2$, then substituting this fact into the $R_{\epsilon,c}$-terms in \eqref{eq:no_exceptionalsets} in the above arguments, we may replace the $1/r^{\delta/2}$ term by $1/r^\delta$ due to the lack of $a$-points outside the sector. Thus, we obtain \eqref{eq:deficiencies} for such $a\in\mathbb{C}$, whenever $ic$ and $-ic$ do not lie in the sector that contains the $a$-points of $f'$ and $\rho(f)$ is not an integer.
\end{remark}

\begin{remark}
For functions of hyper-order $\geq1$ with regular growth the function $e^{e^z}$ is a counter-example of \eqref{eq:deficiencies} for the value $a=0$ for $c\not\in 2\pi i\mathbb{Z}$.
\end{remark}

\subsection{c-separated pair indices of entire functions with finite order}

We recall here the definition of the $c$-separated index $\pi_c(a,f)$ for the value $a\in\mathbb{C}\cup\{\infty\}$ of a meromorphic function $f$ from \cite{halburdk:06AASFM}:

\begin{definition}[$c$-separated index]
\label{def:pair_counting_function}
Let $f$ be a meromorphic function that is not $c$-periodic. Then the $c$-separated index of $a\in\mathbb{C}\cup\{\infty\}$ is defined as
$$\pi_c(a,f) = \liminf_{r\to\infty}\frac{N_c(r,a,f)}{T(r,f)},$$
where the counting function 
$$N_c(r,a,f)=\int_0^r\frac{n_c(t,a,f)-n_c(0,a,f)}{t}dt+n_c(0,a,f)\log r$$
is defined such that $n_c(r,a,f)$ counts a point $|z_0|\leq r$ with $f(z_0)=a=f(z_0+c)$ (such points are called $c$-separated $a$-pairs) according to the number of equal terms at the beginning of the Taylor expansions of $f$ at $z_0$ and at $z_0+c$. For poles $n_c(r,f)$ counts the $c$-separated $0$-pairs of $1/f$.
\end{definition}

We obtain the following relation for entire functions with sufficiently regular growth.

\begin{proposition}
\label{prop:c_separated_index_inequality}
Let $c\in\mathbb{C}\setminus\{0\}$ and let $f$ be an entire function of finite order $\rho<\infty$ that is not $c$-periodic. If the lower order $\mu(f')$ of $f'$ satisfies $\mu(f')>\rho(f)-1/2$, then
$$\sum_{a\in\mathbb{C}}\pi_c(a,f)\leq 1-\delta(0,f').$$
\end{proposition}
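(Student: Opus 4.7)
The plan is to reduce the statement to Proposition~\ref{prop:defect_inequality} by interpreting $c$-separated $a$-pairs of $f$ as zeros of $\Delta_c f$ grouped according to the common value $a=f(z_0)=f(z_0+c)$. The key observation is a multiplicity identity: if $z_0$ is a zero of $\Delta_c f$ of order $m$, then comparing the Taylor expansions of $f$ at $z_0$ and at $z_0+c$ shows that their first $m$ coefficients agree while the $(m+1)$-th differ, so $z_0$ contributes with multiplicity $m$ to $n_c(r,a,f)$ for the unique value $a=f(z_0)$. Since $f$ is entire there are no poles to obstruct the matching, and every zero of $\Delta_c f$ is recovered this way, giving
\[
\sum_{a\in\mathbb{C}}n_c(r,a,f)=n(r,1/\Delta_c f),
\]
and the analogous identity for the integrated counting functions $N$.

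The next step is to interchange the sum over $a$ with the $\liminf$ in the definition of $\pi_c$. For any \emph{finite} subset $A\subset\mathbb{C}$ the elementary inequality $\sum_{a\in A}\liminf g_a(r)\leq\liminf\sum_{a\in A}g_a(r)$, combined with the identity above, yields
\[
\sum_{a\in A}\pi_c(a,f)\leq\liminf_{r\to\infty}\frac{N(r,1/\Delta_c f)}{T(r,f)}.
\]
Because the right-hand side does not depend on $A$, taking the supremum over finite subsets extends the bound to the full sum $\sum_{a\in\mathbb{C}}\pi_c(a,f)$ and simultaneously forces only countably many $\pi_c(a,f)$ to be positive.

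Finally, I would estimate the right-hand side. The hypothesis $\mu(f')>\rho(f)-1/2$ lets Proposition~\ref{prop:defect_inequality} apply in the entire case with the value $0$, giving $\delta(0,f')\leq\delta(0,\Delta_c f/c)=\delta(0,\Delta_c f)$. Moreover, the difference analogue of the lemma on the logarithmic derivative yields $T(r,\Delta_c f)=m(r,\Delta_c f)\leq m(r,f)+m(r,\Delta_c f/f)=T(r,f)+S(r,f)$, hence $\limsup_r T(r,\Delta_c f)/T(r,f)\leq 1$. Putting the pieces together,
\begin{align*}
\liminf_{r\to\infty}\frac{N(r,1/\Delta_c f)}{T(r,f)}
&\leq\limsup_{r\to\infty}\frac{N(r,1/\Delta_c f)}{T(r,\Delta_c f)}\cdot\limsup_{r\to\infty}\frac{T(r,\Delta_c f)}{T(r,f)}\\
&\leq 1-\delta(0,\Delta_c f)\leq 1-\delta(0,f'),
\end{align*}
which is the required inequality. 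The main obstacle I anticipate is the delicate bookkeeping in interchanging the a priori uncountable sum over $a$ with $\liminf_r$, which is resolved by the finite-subset/supremum trick used above; checking the multiplicity identity in the first paragraph is routine once one recalls that $f$ has no poles.
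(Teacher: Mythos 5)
Your argument is correct and follows the same skeleton as the paper's proof: first interchange $\sum_{a}$ and $\liminf_{r}$ (the paper does this via a continuous-variable Fatou lemma proved in its appendix, which is the same device as your finite-subset/supremum trick), and then identify $\sum_a N_c(r,a,f)$ with $N(r,1/\Delta_c f)$ for entire $f$ by exactly the multiplicity comparison you describe. Where you genuinely diverge is the endgame. The paper stays with $f'$: it bounds $N(r,1/\Delta_c f)\leq N(r,1/f')+A(0,r)$ via Jensen's formula (as in Corollary~\ref{corollary:counting_function}) and closes with $\sum_a\pi_c(a,f)\leq 1+\theta(0,f')-\Theta(0,f')\leq 1-\delta(0,f')$, whereas you pass through $\delta(0,\Delta_c f)$ and quote Proposition~\ref{prop:defect_inequality} as a black box; both routes are valid and yours is arguably tidier. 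One point you should make explicit: your factors $\limsup_{r\to\infty} T(r,\Delta_c f)/T(r,f)\leq 1$ and $\limsup_{r\to\infty} N(r,1/\Delta_c f)/T(r,\Delta_c f)=1-\delta(0,\Delta_c f)$ involve unrestricted limits over all $r$, so you cannot invoke a form of the lemma on the difference quotient that carries an exceptional set of finite logarithmic measure, as the generic statement $m(r,\Delta_c f/f)=S(r,f)$ does. For finite order one has the Chiang--Feng estimate $m(r,\Delta_c f/f)=O(r^{\rho-1+\epsilon})$ valid for all large $r$, and the hypothesis $\mu(f')>\rho(f)-1/2$ guarantees this is $o(T(r,f))$ with no exceptional set --- the same mechanism the paper uses to control $A(0,r)$ in the proof of Proposition~\ref{prop:defect_inequality}. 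With that caveat spelled out, your proof is complete.
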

\begin{proof}
Applying the continuous variable variant of Fatou's lemma with the counting measure (the proof of which is provided in the appendix below), we have
$$\sum_{a\in\mathbb{C}}\pi_c(a,f)
=\sum_{a\in\mathbb{C}}\liminf_{r\to\infty}\frac{N_c(r,a,f)}{T(r,f)}
\leq \liminf_{r\to\infty}\sum_{a\in\mathbb{C}}\frac{N_c(r,a,f)}{T(r,f)}.$$
Let $A$ be defined same as in \eqref{eq:shorthand} in Section \ref{defects}, so that we obtain
\begin{align*}
    &\sum_{a\in\mathbb{C}}\pi_c(a,f)+\Theta(0,f')
    \leq \liminf_{r\to\infty}\sum_{a\in\mathbb{C}}\frac{N_c(r,a,f)}{T(r,f)}-\limsup_{r\to\infty}\frac{\overline{N}(r,1/f')}{T(r,f')}+1\\[6pt]
    &\leq\liminf_{r\to\infty}\frac{N(r,1/\Delta_c f)}{T(r,f')}\frac{m(r,f')}{m(r,f)}+\liminf_{r\to\infty}\frac{-\overline{N}(r,1/f')}{T(r,f')}+1\\[6pt]
    &\leq\liminf_{r\to\infty}\frac{N(r,1/f')-\overline{N}(r,1/f')+A(0,r)}{T(r,f')}+1
    =\theta(0,f')+1,
\end{align*}
from which the statement immediately follows.
\end{proof}

\subsection{Analogue of 2nd main theorem for integrable functions}

For now assume that $f$ is a meromorphic function satisfying the requirements of Corollary \ref{corollary:ram_vs_pair_higher_order} that has a meromorphic primitive $F$. Then using the usual techniques, with Corollary \ref{corollary:ram_vs_pair_higher_order} acting similarly to the lemma on the logarithmic derivative, the following analogy of the second main theorem can be proved:

\begin{proposition}
If $f$ satisfies the hypothesis of Corollary \ref{corollary:ram_vs_pair_higher_order}, and has a primitive $F$, then
$$\sum_{i=1}^q m\left(r,\frac{1}{f-a_i'}\right)\leq m(r,f)+N(r,\Delta_c F)-N\left(r,\frac{1}{\Delta_c F}\right)+S(r,f)$$
outside an exceptional set of finite logarithmic measure, where $a_i$ are distinct $c$-periodic meromorphic functions such that $T(r,a_i)=S(r,f)$.
\end{proposition}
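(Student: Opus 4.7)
The strategy is a direct analogue of the classical proof of the second main theorem, with Corollary~\ref{corollary:ram_vs_pair_higher_order} playing the role of the logarithmic derivative lemma. The crucial observation is that if $a_i$ is $c$-periodic then $\Delta_c a_i \equiv 0$, so setting $G_i := F - a_i$ we have simultaneously $\Delta_c G_i = \Delta_c F$ and $G_i' = f - a_i'$. The hyper-order of each $G_i$ agrees with that of $F$ (and hence with that of $f$, using the standard comparison $T(r,F') \asymp T(r,F)$ up to small errors), and $G_i$ is not $c$-periodic because $f$ is not $c$-periodic. Applying the case $n=1$, $a=0$ of Corollary~\ref{corollary:ram_vs_pair_higher_order} to $G_i$ therefore yields the essential estimate
\begin{equation*}
m\left(r, \frac{\Delta_c F}{f - a_i'}\right) = m\left(r, \frac{\Delta_c G_i}{G_i'}\right) = S(r, f), \qquad i = 1, \ldots, q.
\end{equation*}

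With this in hand I would first perform the standard ``only one term can be large'' reduction for small target functions: setting $\phi := \sum_{i=1}^q 1/(f - a_i')$, after eliminating any coincidences among the $a_i'$ one obtains
\begin{equation*}
\sum_{i=1}^q m\left(r, \frac{1}{f - a_i'}\right) \leq m(r, \phi) + \sum_{i \neq j} m\left(r, \frac{1}{a_i' - a_j'}\right) + O(1) = m(r, \phi) + S(r, f),
\end{equation*}
since $T(r, a_i') \leq 2T(r,a_i) + S(r,a_i) = S(r, f)$ for each $i$.

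Next, I would split $\phi = (\Delta_c F \cdot \phi)/\Delta_c F$ and use subadditivity of $m$ together with the key estimate displayed above, applied to each of the $q$ summands of $\Delta_c F \cdot \phi$, to conclude $m(r, \Delta_c F \cdot \phi) = S(r, f)$ and hence
\begin{equation*}
m(r, \phi) \leq m\left(r, \frac{1}{\Delta_c F}\right) + S(r, f).
\end{equation*}
Jensen's formula rewrites the remaining term as $m(r, \Delta_c F) + N(r, \Delta_c F) - N(r, 1/\Delta_c F) + O(1)$, and a single final application of Theorem~\ref{thm:ram_vs_pair} to $F$ with $a = 0$ gives
\begin{equation*}
m(r, \Delta_c F) \leq m(r, f) + m\left(r, \frac{\Delta_c F}{f}\right) = m(r, f) + S(r, f).
\end{equation*}
Chaining these inequalities produces the claimed bound, with all exceptional sets (from Theorem~\ref{thm:ram_vs_pair}, Corollary~\ref{corollary:ram_vs_pair_higher_order}, and the small-function reduction) united into a single set of finite logarithmic measure.

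The main obstacle I expect is the small-function reduction in the first step: one must verify carefully that the pairwise ``separation'' terms $m(r, 1/(a_i' - a_j'))$ and the $O(\log q)$ losses when distributing $\log^+$ over the sum genuinely amount to $S(r,f)$ outside the exceptional set. Everything else is essentially bookkeeping once the key identity $\Delta_c(F - a_i) = \Delta_c F$ and the corresponding reduction of $(F - a_i)'$ to $f - a_i'$ are recognized.
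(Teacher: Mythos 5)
Your proof is correct and is precisely the argument the paper intends: the paper gives no explicit proof, saying only that the result follows by ``the usual techniques'' with Corollary~\ref{corollary:ram_vs_pair_higher_order} in place of the lemma on the logarithmic derivative, and your key observation that $\Delta_c(F-a_i)=\Delta_c F$ while $(F-a_i)'=f-a_i'$ is exactly the mechanism that makes this work (and explains the paper's subsequent remark that constant targets $a_i'$ are excluded). The only point worth making explicit is that the separation step requires the targets $a_i'$ to be pairwise distinct, i.e.\ no two of the $c$-periodic functions $a_i$ may differ by a constant; this is implicit in the statement, and your parenthetical about eliminating coincidences is the right way to handle it.
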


Note that in the proposition above, $a_i$ cannot be chosen such that $a_i'$ are constants, as such $a_i$ are not in the kernel of the operator $\Delta_c$, so in order to include constants in our results, we have to take a higher order difference.

\begin{proposition}
If $f$ satisfies the hypothesis of Corollary \ref{corollary:ram_vs_pair_higher_order}, and has a primitive $F$, then
$$m(r,f)+\sum_{i=1}^q m\left(r,\frac{1}{f-b_i'}\right)\leq 2T(r,f)+N(r,\Delta^2_c F)-2N(r,f)-N\left(r,\frac{1}{\Delta^2_c F}\right)+S(r,f)$$
outside an exceptional set of finite logarithmic measure, where $b_i=C_iz+a_i+\tilde{a}_iz$ are distinct meromorphic functions with $C_i\in\mathbb{C}$ and $a_i,\tilde{a}_i$ are $c$-periodic meromorphic functions with $T(r,a_i)=S(r,f)$ and $T(r,\tilde{a}_i)=S(r,f)$.
\end{proposition}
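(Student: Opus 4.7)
The plan is to parallel the proof of the preceding proposition, with $\Delta_c^2 F$ playing the role that $\Delta_c F$ played there, and Corollary~\ref{corollary:ram_vs_pair_higher_order} (in its higher-order form) substituting for the lemma on the difference quotient. The structural key is that each target $b_i$ is annihilated by $\Delta_c^2$: since $a_i$ and $\tilde{a}_i$ are $c$-periodic, direct computation gives $\Delta_c(C_iz+a_i)=C_ic$ and $\Delta_c(\tilde{a}_iz)=c\tilde{a}_i$, both $c$-periodic, so $\Delta_c^2 b_i=0$. Consequently $\Delta_c^2(F-b_i)=\Delta_c^2 F$ while $(F-b_i)'=f-b_i'$, so the $b_i$'s are exactly the natural class of targets for this setup.

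First I would apply equation \eqref{eq:corollary_eq2} from Corollary~\ref{corollary:ram_vs_pair_higher_order} (with $n=1$ and $k=1$) to each $F-b_i$ to obtain
\begin{equation*}
m\left(r,\frac{\Delta_c^2 F}{f-b_i'}\right)=m\left(r,\frac{\Delta_c^2(F-b_i)}{(F-b_i)'}\right)=S(r,f),\qquad i=1,\dots,q.
\end{equation*}
The $b_i'=C_i+\tilde{a}_i+a_i'+\tilde{a}_i'z$ are themselves small with respect to $f$, since $T(r,b_i')\le T(r,\tilde{a}_i'z)+S(r,f)=O(\log r)+S(r,f)=S(r,f)$, and their pairwise differences satisfy $m(r,1/(b_i'-b_j'))=S(r,f)$. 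The standard small-target separation argument then yields
\begin{equation*}
\sum_{i=1}^q m\left(r,\frac{1}{f-b_i'}\right)\le m\left(r,\sum_{i=1}^q\frac{1}{f-b_i'}\right)+S(r,f),
\end{equation*}
and after factoring $1/\Delta_c^2 F$ out of the right-hand sum and bounding each resulting summand via the previous display, I obtain $\sum_{i=1}^q m(r,1/(f-b_i'))\le m(r,1/\Delta_c^2 F)+S(r,f)$.

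Next I would pass from $m(r,1/\Delta_c^2 F)$ to counting functions via Nevanlinna's first main theorem,
\begin{equation*}
m(r,1/\Delta_c^2 F)=m(r,\Delta_c^2 F)+N(r,\Delta_c^2 F)-N(r,1/\Delta_c^2 F)+O(1),
\end{equation*}
and control the proximity term $m(r,\Delta_c^2 F)$ by writing $\Delta_c^2 F/f=\Delta_c^2 F/F'$ and applying \eqref{eq:corollary_eq2} once more (this time to $F$ itself, again with $n=1$, $k=1$) to get $m(r,\Delta_c^2 F/f)=S(r,f)$ and hence $m(r,\Delta_c^2 F)\le m(r,f)+S(r,f)$. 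Chaining these estimates and adding $m(r,f)=T(r,f)-N(r,f)$ to both sides produces the claimed inequality after routine rearrangement.

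The main subtlety is verifying that the hypotheses of Corollary~\ref{corollary:ram_vs_pair_higher_order} carry over to $F$ and to each $F-b_i$, in particular that $\Delta_c(F-b_i)$ is not $c$-periodic. This is precisely where the specific form $C_iz+a_i+\tilde{a}_iz$ is used: $\Delta_c(F-b_i)=\Delta_c F-C_ic-c\tilde{a}_i$ differs from $\Delta_c F$ only by a $c$-periodic function, so non-$c$-periodicity transfers from the standing hypothesis. Everything else — including the standard small-target separation inequality, which relies on $m(r,1/(b_i'-b_j'))=S(r,f)$ — is routine.
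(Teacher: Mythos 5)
Your proof is correct and follows exactly the route the paper intends (the paper gives no explicit proof, only the remark that the result follows by "the usual techniques, with Corollary \ref{corollary:ram_vs_pair_higher_order} acting similarly to the lemma on the logarithmic derivative"): the key point that $\Delta_c^2 b_i=0$, the small-target separation, factoring out $1/\Delta_c^2 F$, and the first main theorem plus $m(r,\Delta_c^2 F/F')=S(r,f)$ is precisely the standard second-main-theorem argument adapted to $\Delta_c^2 F$. The only caveat, which is really an imprecision in the statement itself rather than in your argument, is that the separation step needs the targets $b_i'$ (not just the $b_i$) to be distinct.
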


In the following we will consider $b_i=C_iz$, where $C_i\in\mathbb{C}$ are distinct constants, and seek to further refine the above result.

Recall the pair-counting function $N_c$ from Definition \ref{def:pair_counting_function}. Let $N_0(r,g)$ count a pole $z_0$ of a function $g$ if the Laurent expansions of $g(z_0)$ and $g(z_0+c)$ have equal principal parts, multiplicity
counted according to the number of equal terms in the beginning of the analytic
part of their expansions. Then from the above proposition,
and the fact that
$$N\left(r,\frac{1}{\Delta^2_c F}\right)\geq\sum_{i=1}^q N_c\left(r,\frac{1}{\Delta_c F-C_ic}\right)+N_0(r,\Delta_c F),$$
we have
\begin{align*}
    (q-1)T(r,f)&\leq N(r,f)+N(r,\Delta^2_c F)-2N(r,f)-N_0(r,\Delta_c F)\\[6pt]
    &\quad+\sum_{i=1}^q N\left(r,\frac{1}{f-C_i}\right)-N_c\left(r,\frac{1}{\Delta_c F-C_ic}\right)+S(r,f).
\end{align*}
Now in order to handle the poles and further simplify the notation, we make the following definitions
\begin{definition}
For non-$c$-periodic meromorphic $g$, define 
$$N_{\text{var}}(r,g)=\int_0^r \frac{n_{\text{var}}(t,g)-n_{\text{var}}(0,g)}{t}dt+n_{\text{var}}(0,g)\log r$$
where the contribution of a point $z_0\in \overline{D}(0,t)$ to $n_{\text{var}}(t,g)$ is
$$|\text{ord}_{-}(z_0) - \text{ord}_{-}(z_0+c)|-2m_{z_0},$$
with $m_{z_0}$ being the number of equal terms in the principal part of the Laurent expansions of $g$ at $z_0$ and $z_0+c$.
For $a\in\mathbb{C}$, define
$$\widehat{N}_c(r,a,g)=N\left(r,\frac{1}{g'-a}\right)-N_c\left(r,\frac{1}{\Delta_c g-ac}\right)$$
and similarly for poles
$$\widehat{N}_c(r,g)=\widehat{N}_c(r,\infty,g)=N(r,g')-N_c(r,\Delta_c g).$$
For $a\in\mathbb{C}\cup\{\infty\}$, define
$$\widehat{\Pi}_c(a,g')=1-\limsup_{r\to\infty} \frac{\widehat{N}_c(r,a,g)}{T(r,g')}.$$
\end{definition}
Continuing from where we left off, by using the above definitions, we get
\begin{align*}
    &N(r,\Delta^2_c F)-2N(r+|c|,F)-N_0(r,\Delta_c F)\\[6pt]
    &\leq \biggl(N(r,\Delta^2_c F)-2N(r+|c|,\Delta_c F)-N_0(r,\Delta_c F)\biggl)+S(r,\Delta_c F)\\
    &\quad+N(r,\Delta_c F)+\biggl(N(r,\Delta_c F)-2N(r+|c|,F)-N_0(r,F)\biggl)+N_0(r,F)\\[6pt]
    &\leq -N_c(r,\Delta_c F)+N(r,\Delta_c F)-N_c(r,F)+N_0(r,F)+S(r,f)\\[6pt]
    &\leq -N_c(r,\Delta_c F)+N_{\text{var}}(r,F)+S(r,f),
\end{align*}
where we have used the fact that for any meromorphic non-$c$-periodic $g$
$$N(r,\Delta_c g)-2N(r+|c|,g)-N_0(r,g)\leq-N_c(r,g)$$
proof of which is found in \cite[p. 471]{halburdk:06AASFM}, and similar calculations show that
$$N(r,\Delta_c F)-N_c(r,F)+N_0(r,F)=N_{\text{var}}(r,F).$$
Lemma \ref{lemma:hyperorder_shift_estimate} shows that we can interchange the terms $N(r,f)$ and $N(r+|c|,f)$ at the expense of an additional error term of growth class $S(r,f)$ and an exceptional set of finite logarithmic measure. Thus, we obtain
\begin{equation}
\label{eq:smt_truncated}
\begin{split}
    (q-1)T(r,f)&\leq\widehat{N}_c(r,F)+\sum_{i=1}^q \widehat{N}_c(r,C_i,F)\\
    &\quad+N_{\text{var}}(r,F)-2\overline{N}(r,F)+S(r,f)
\end{split}
\end{equation}
which implies the following relation. 

\begin{proposition}
\label{prop:defect_relation}
If $f$ satisfies the hypothesis of Corollary \ref{corollary:ram_vs_pair_higher_order}, and has a primitive $F$, then
\begin{equation}
\label{eq:defect_relation}
\sum_{a\in\mathbb{C}\cup\{\infty\}}\widehat{\Pi}_c(a,f)\leq 2\Theta(\infty,f) + \limsup_{r\to\infty}\frac{N_{var}(r,F)}{T(r,f)}.
\end{equation}

\begin{proof}
This follows from \eqref{eq:smt_truncated} by the same logic as the usual proof of the result
$$\sum_{a\in\mathbb{C}\cup\{\infty\}}\Theta(a,f)\leq 2$$
follows from the second main theorem (see \cite[Proof of Theorem 2.4]{hayman:64} for example).
\end{proof}

\end{proposition}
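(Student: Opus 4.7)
The plan is to convert the second-main-theorem-like inequality \eqref{eq:smt_truncated} into the stated defect relation via the standard Hayman-type rearrangement used to derive $\sum_{a}\Theta(a,f)\le 2$ from the classical second main theorem. First I would divide \eqref{eq:smt_truncated} through by $T(r,f)$, noting that $\overline{N}(r,F)=\overline{N}(r,f)$ since $f=F'$ has poles at precisely the same points as $F$ (and $\overline{N}$ ignores multiplicities), so that the term $-2\overline{N}(r,F)/T(r,f)$ feeds directly into $2\Theta(\infty,f)$ after taking limits.

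Next, I fix an arbitrary finite family $C_1,\dots,C_q\in\mathbb{C}$ of distinct constants and rewrite the divided inequality in the symmetric form
$$\sum_{i=1}^{q}\left(1-\frac{\widehat{N}_c(r,C_i,F)}{T(r,f)}\right)+\left(1-\frac{\widehat{N}_c(r,F)}{T(r,f)}\right)\le 2\left(1-\frac{\overline{N}(r,f)}{T(r,f)}\right)+\frac{N_{\text{var}}(r,F)}{T(r,f)}+o(1),$$
valid as $r\to\infty$ outside the exceptional set of finite logarithmic measure supplied by Corollary \ref{corollary:ram_vs_pair_higher_order}. Each term $1-\widehat{N}_c(r,C_i,F)/T(r,f)$ is bounded below by $-o(1)$, because $\widehat{N}_c(r,C_i,F)\le N(r,1/(f-C_i))\le T(r,f)+O(1)$, and similarly for the pole term. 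Applying $\liminf\sum_iX_i\ge\sum_i\liminf X_i$ on the left, and $\liminf(U+V)\le\liminf U+\limsup V$ on the right with $U=2(1-\overline{N}(r,f)/T(r,f))$ and $V=N_{\text{var}}(r,F)/T(r,f)$, gives the finite-$q$ bound
$$\sum_{i=1}^{q}\widehat{\Pi}_c(C_i,f)+\widehat{\Pi}_c(\infty,f)\le 2\Theta(\infty,f)+\limsup_{r\to\infty}\frac{N_{\text{var}}(r,F)}{T(r,f)}.$$

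Since the right-hand side depends on neither $q$ nor the chosen $C_i$'s, taking the supremum of the left-hand side over all finite subsets of $\mathbb{C}$ yields \eqref{eq:defect_relation}; in particular the bound itself forces $\widehat{\Pi}_c(\cdot,f)$ to be nonzero at only countably many points, so the series on the left of \eqref{eq:defect_relation} is unambiguously defined. The main subtlety to monitor is the careful bookkeeping of $\liminf$ versus $\limsup$ when separating the $q+1$ defect-type terms from the $N_{\text{var}}$ and $2\Theta(\infty,f)$ contributions, together with the routine but necessary check that the exceptional set of finite logarithmic measure issued by \eqref{eq:smt_truncated} does not obstruct the passage to $r\to\infty$ along its complement.
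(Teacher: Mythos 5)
Your proposal is the same argument the paper intends: the paper's proof of Proposition~\ref{prop:defect_relation} is literally a deferral to the standard Hayman-type rearrangement of a second-main-theorem inequality, and you execute exactly that — divide \eqref{eq:smt_truncated} by $T(r,f)$, symmetrize, use $\liminf\sum\ge\sum\liminf$ on the defect side, and take the supremum over finite families $\{C_i\}$. The identifications $\overline{N}(r,F)=\overline{N}(r,f)$ and $0\le\widehat{N}_c(r,a,F)\le N(r,1/(f-a))$ are correct, as is the countability remark.

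The one step I would not call ``routine'' is precisely the exceptional-set bookkeeping on the \emph{majorant} side. In the classical derivation of $\sum_a\Theta(a,f)\le 2$ every defect term sits on the minorated side, where the exceptional set is harmless because $\liminf_{r\notin E}X\ge\liminf_{r}X$. Here, however, the term $2\bigl(1-\overline{N}(r,f)/T(r,f)\bigr)$ sits on the majorant side, and your chain requires
$$\liminf_{r\to\infty,\,r\notin E}\,2\left(1-\frac{\overline{N}(r,f)}{T(r,f)}\right)\;\le\;2\Theta(\infty,f)=2\left(1-\limsup_{r\to\infty}\frac{\overline{N}(r,f)}{T(r,f)}\right),$$
i.e.\ $\limsup_{r\notin E}\overline{N}/T\ge\limsup_{r}\overline{N}/T$. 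Restricting to a subset can only \emph{decrease} a $\limsup$, so this inequality goes the wrong way a priori; it needs a separate justification (e.g.\ a density/regularity argument showing the $\limsup$ of $\overline{N}/T$ is attained along $r\notin E$, or else the convention that $\Theta$ is computed modulo sets of finite logarithmic measure). To be fair, the paper's one-line proof does not address this either, and for the regularly growing examples the paper has in mind the point is moot; but since you explicitly flag the exceptional-set passage and then dismiss it as routine, you should be aware that this particular direction is the only genuinely non-routine part of the argument. Everything else in your write-up is sound.
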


\begin{remark}
Note that $\Theta(\infty,f)\geq 1/2$. Further, it can be readily seen that $$2\Theta(\infty,f) + \limsup_{r\to\infty}\frac{N_{var}(r,F)}{T(r,f)}\geq0.$$
\end{remark}

\begin{example}
To see that the upper bound in Proposition \ref{prop:defect_relation} is attained, consider the meromorphic function $F(z)=e^z+z(W(z,c))^n$ where $W(\cdot,c)$ is the Weierstrass elliptic function with period $c\not\in 2\pi i\mathbb{Z}$ and $n\in\mathbb{N}$ and denote $f=F'$. 

Let us first find a lower bound for the left hand side of the 'defect relation' \eqref{eq:defect_relation} for $f$. Now
$$\Delta_c F(z)=\Delta_c e^z+c(W(z,c))^n \quad \text{and} \quad \Delta^2_c F(z)=\Delta^2_c e^z$$
Then for $a\in\mathbb{C}$ we have 
$$N_c(r,a,\Delta_c F)\leq N(r, 1/\Delta_c^2 F) = N(r, 1/\Delta_c^2 e^z)=0,$$
and therefore
$$0\leq \widehat{N}_c(r,a,F) = N(r,1/(f-a))-N_c(r,a,\Delta_c F)\leq T(r,f).$$
Thus
$$\sum_{a\in\mathbb{C}\cup\{\infty\}}\widehat{\Pi}_c(a,f)\geq\widehat{\Pi}_c(\infty,f).$$
Since $\Delta_c F$ has only $c$-separated poles with identical Laurent series principal parts, we get
$$N_c(r,\Delta_c F)=2N(r,\Delta_c F)=2N(r,F).$$
In total we have the following estimate for the left hand side of \eqref{eq:defect_relation}:
\begin{align*}
   & \sum_{a\in\mathbb{C}\cup\{\infty\}}\widehat{\Pi}_c(a,f)
    \geq\widehat{\Pi}_c(\infty,f)\\
    &=1-\limsup_{r\to\infty} \frac{\widehat{N}_c(r,\infty,F)}{T(r,f)}
    =1-\limsup_{r\to\infty} \frac{N(r,f)-N_c(r,\Delta_c F)}{T(r,f)}\\
    &=1-\limsup_{r\to\infty} \frac{N(r,f)-2N(r,F)}{T(r,f)}
    =1+\lim_{r\to\infty} \frac{N(r,F)-\overline{N}(r,F)}{N(r,F)+\overline{N}(r,F)}\\
    &=1+\frac{2n-1}{2n+1}
    =\frac{4n}{2n+1},
\end{align*}
because the poles are not deficient. For the right hand side of \eqref{eq:defect_relation}, we have
$$2\Theta(\infty,f) + \limsup_{r\to\infty}\frac{N_{var}(r,f)}{T(r,f)}=2\Theta(\infty,f)=2\left(1-\frac{1}{2n+1}\right)=\frac{4n}{2n+1}$$
since $N_{\text{var}}(r,F)=0$ as all poles of $F$ are $c$-separated, of equal order and do not have equal principal parts. Thus, in summary, we have
$$\frac{4n}{2n+1}\leq \sum_{a\in\mathbb{C}\cup\{\infty\}}\widehat{\Pi}_c(a,f) \leq \frac{4n}{2n+1}$$
which implies that the bound in \eqref{eq:defect_relation} is attained:
$$\sum_{a\in\mathbb{C}\cup\{\infty\}}\widehat{\Pi}_c(a,f) = \frac{4n}{2n+1}=2\Theta(\infty,f) + \limsup_{r\to\infty}\frac{N_{var}(r,f)}{T(r,f)}.$$
Moreover, as $n\to\infty$, upper bounds arbitrarily close to 2 are attained by these functions in \eqref{eq:defect_relation}.
\end{example}

\subsection{Delay differential equations}

In this section we prove two results concerning meromorphic solutions to delay differential equations; the first one analogous to the classic Clunie Lemma~\cite[Lemma~2]{clunie:62} and the second one analogous to the classic result due to A. Z. Mohon’ko and V. Z. Mohon’ko \cite{mohonko:74}.

Recall the conventional formulation of the Clunie Lemma:

\begin{citelemma}[Clunie Lemma]
Let $n\in\mathbb{N}$ and let $f$ be a transcendental meromorphic solution to
$$f^nP(z,f)=Q(z,f)$$
where $P$ and $Q$ are polynomials in $f, f', f'', \cdots$ with small meromorphic coefficients. If the degree of $Q$ in $f$ and its derivatives is at most $n$, then 
$$m(r,P)=S(r,f)$$ 
where the exception set implied in $S(r,f)$ is one of finite linear measure.
\end{citelemma}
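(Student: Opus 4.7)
The plan is to follow the classical Clunie splitting argument: on the circle $|z|=r$ I would separate the angles where $|f(re^{i\theta})|\ge 1$ from those where $|f(re^{i\theta})|<1$, and bound $\log^+|P|$ on each arc by a sum of terms whose proximity functions are either small by hypothesis or controlled by the lemma on the logarithmic derivative.

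On the arc $E_1=\{\theta:|f(re^{i\theta})|\ge 1\}$, I would exploit $P=Q/f^n$. Writing
$$Q(z,f)=\sum_\alpha a_\alpha(z)\, f^{\alpha_0}(f')^{\alpha_1}\cdots(f^{(k)})^{\alpha_k}$$
with multi-indices of total degree $|\alpha|:=\alpha_0+\cdots+\alpha_k\le n$, and extracting a factor $f^{\alpha_j}$ from each $(f^{(j)})^{\alpha_j}$, I obtain
$$\frac{Q}{f^n}=\sum_\alpha a_\alpha\, f^{|\alpha|-n}\prod_{j=1}^{k}\left(\frac{f^{(j)}}{f}\right)^{\alpha_j}.$$
Since $|\alpha|\le n$ and $|f|\ge 1$ on $E_1$, the factor $|f|^{|\alpha|-n}\le 1$, so $\log^+|P|$ is bounded on $E_1$ by a finite sum of $\log^+|a_\alpha|$ and $\log^+|f^{(j)}/f|$ terms plus a constant.

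On the complementary arc $E_2=\{\theta:|f(re^{i\theta})|<1\}$, I would use $P$ directly. Expanding $P(z,f)=\sum_\beta b_\beta(z)\,f^{\beta_0}\cdots(f^{(k)})^{\beta_k}$ and factoring as before,
$$P=\sum_\beta b_\beta\, f^{|\beta|}\prod_{j=1}^{k}\left(\frac{f^{(j)}}{f}\right)^{\beta_j},$$
and since $|f|<1$ and $|\beta|\ge 0$, the factor $|f|^{|\beta|}\le 1$, giving the same type of bound on $\log^+|P|$ as on $E_1$.

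Combining both arcs and integrating yields
$$m(r,P)\le \sum_\alpha m(r,a_\alpha)+\sum_\beta m(r,b_\beta)+C\sum_{j=1}^{k} m\!\left(r,\frac{f^{(j)}}{f}\right)+O(1),$$
where $C$ depends only on the degrees involved. The coefficient proximity functions contribute $S(r,f)$ by the smallness hypothesis, and the sum of logarithmic-derivative proximity functions is $S(r,f)$ by the classical lemma on the logarithmic derivative, whose exceptional set has finite linear measure. The only delicate point, and the main (and rather mild) ``obstacle'', is verifying that the exponent of $f$ in each term is non-positive on $E_1$ and non-negative on $E_2$, which is exactly what the degree condition $\deg_f Q\le n$ provides; everything else is routine Nevanlinna bookkeeping.
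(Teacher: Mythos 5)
Your proposal is the correct and standard Clunie splitting argument: the paper itself does not prove this lemma (it only states it, citing Clunie's original work), but your proof coincides with the classical one and with the technique the paper uses for its own difference analogue, namely splitting the circle at $|f|=1$ (respectively $|f'|=1$ in the paper's proposition), bounding $P=Q/f^n$ on the large-modulus arc and $P$ directly on the small-modulus arc, and absorbing everything into coefficient proximity functions and logarithmic derivatives. The degree bookkeeping ($|\alpha|\le n$ on one arc, $|\beta|\ge 0$ on the other) is exactly right, so there is nothing to correct.
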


The following analogue can be proven with the help of Corollary \ref{corollary:ram_vs_pair_higher_order}.

\begin{proposition}
Let $n\in\mathbb{N}$ and let $f$ be a meromorphic solution to 
$$(f')^nP(z,f)=Q(z,f)$$
where $P$ and $Q$ are polynomials in $\Delta_{c_1}f,\cdots,\Delta_{c_m}f$ with small meromorphic coefficients. If $f$ satisfies the hypothesis of Corollary \ref{corollary:ram_vs_pair_higher_order} for $c_1,\cdots,c_m$, and if the degree of $Q$ in the differences of $f$ is at most $n$, then
$$m(r, P)=S(r,f)$$
outside a set of finite logarithmic measure.

\begin{proof}
This requires just a slight modification to the original proof of Clunie's lemma: Split the domain of the integration in $m(r, P)$ into two parts $E$ and $[0,2\pi]\setminus E$, where 
$$E=\left\{ \theta\in[0,2\pi]: |f'(re^{i\theta})|<1 \right\}.$$
Write $P=\sum_{\lambda}P_\lambda=\sum_{\lambda}a_{\lambda}(\Delta_{c_1}f)^{l_1}\cdots(\Delta_{c_m}f)^{l_m}$ where the sum is over a set of multi-indices $\lambda=(l_1,\cdots,l_m)$. Then for $z\in E$, we have
$$|P_\lambda(z,f)|=|a_\lambda||\Delta_{c_1}f|^{l_1}\cdots|\Delta_{c_m}f|^{l_m}\leq |a_\lambda|\left|\frac{\Delta_{c_1}f}{f'}\right|^{l_1}\cdots\left|\frac{\Delta_{c_m}f}{f'}\right|^{l_m},$$
so that Corollary \ref{corollary:ram_vs_pair_higher_order} implies that the $E$-part of the integral in $m(r, P)$ is $S(r,f).$
For $z\in[0,2\pi]\setminus E,$ write $Q=\sum_{\lambda}Q_\lambda=\sum_{\lambda}b_{\lambda}(\Delta_{c_1}f)^{l_1}\cdots(\Delta_{c_m}f)^{l_m}.$ Now
\begin{align*} &|P(z,f)|\leq|1/f'|^n\sum_{\lambda}|b_\lambda||\Delta_{c_1}f|^{l_1}\cdots|\Delta_{c_m}f|^{l_m}\\
&\leq\sum_{\lambda}|b_\lambda|\left|\frac{\Delta_{c_1}f}{f'}\right|^{l_1}\cdots\left|\frac{\Delta_{c_m}f}{f'}\right|^{l_m},
\end{align*}
since by hypothesis for any $\lambda=(l_1,\cdots,l_m)$ in $Q$ we have $l_1+\cdots+l_m\leq n$. Again by Corollary \ref{corollary:ram_vs_pair_higher_order} this implies that the $([0,2\pi]\setminus E)$-part of the integral in $m(r, P)$ is $S(r,f),$ which concludes the proof.
\end{proof}
\end{proposition}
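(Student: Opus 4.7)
The plan is to adapt the classical proof of Clunie's lemma, with Corollary~\ref{corollary:ram_vs_pair_higher_order} taking the role that the lemma on the logarithmic derivative plays in the classical argument. Write $P = \sum_\lambda a_\lambda \prod_{i=1}^m (\Delta_{c_i} f)^{l_i}$ and $Q = \sum_\lambda b_\lambda \prod_{i=1}^m (\Delta_{c_i} f)^{l_i}$, where $\lambda = (l_1,\ldots,l_m)$ ranges over a finite index set, and split the circle of integration into
\[
E = \{\theta \in [0, 2\pi] : |f'(re^{i\theta})| < 1\}, \qquad E^c = [0, 2\pi] \setminus E.
\]
The goal on both pieces is to majorize $|P(re^{i\theta}, f)|$ by a finite sum of products of moduli of the small coefficients $a_\lambda, b_\lambda$ and factors $|\Delta_{c_i} f / f'|^{l_i}$, so that integrating $\log^+$ and applying Corollary~\ref{corollary:ram_vs_pair_higher_order} (with $n=1$) plus the small-coefficient hypothesis yields $S(r,f)$.

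On $E$, where $|1/f'| > 1$, one has $|\Delta_{c_i} f|^{l_i} \leq |\Delta_{c_i} f / f'|^{l_i}$ immediately, so each term of $P$ is bounded as required. On $E^c$ the equation $(f')^n P = Q$ gives $|P| = |Q|/|f'|^n$, and the hypothesis that the total degree of each term of $Q$ in the differences satisfies $l_1 + \cdots + l_m \leq n$ allows us to distribute the factor $|f'|^{-n}$ via
\[
\frac{\prod_i |\Delta_{c_i} f|^{l_i}}{|f'|^n}
= \left(\prod_i \left|\frac{\Delta_{c_i} f}{f'}\right|^{l_i}\right) |f'|^{\,l_1 + \cdots + l_m - n},
\]
where the last factor has non-positive exponent and is therefore at most $1$ on $E^c$ since $|f'| \geq 1$ there. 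Combining both cases, taking $\log^+$, and integrating then bounds $m(r, P)$ by a finite sum of terms of the form $m(r, a_\lambda)$, $m(r, b_\lambda)$, and $m(r, \Delta_{c_i} f / f')$.

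The coefficient contributions are $S(r, f)$ by hypothesis, and Corollary~\ref{corollary:ram_vs_pair_higher_order} supplies $m(r, \Delta_{c_i} f / f') = S(r, f')$ outside an exceptional set of finite logarithmic measure for each $i$; taking the finite union over $i$ and the finitely many monomials preserves this property. The main obstacle is ensuring that the resulting $S(r, f')$ bound can be replaced by $S(r, f)$. This requires verifying that $T(r, f')$ and $T(r, f)$ are comparable up to $S(r, f)$, which follows under the hyper-order hypothesis of Corollary~\ref{corollary:ram_vs_pair_higher_order} (since $N(r, f') \leq 2 N(r, f) + S(r, f)$ and $m(r, f') \leq m(r, f) + m(r, f'/f) = m(r, f) + S(r, f)$ by the lemma on the logarithmic derivative). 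With this identification the integration concludes with $m(r, P) = S(r, f)$ outside a set of finite logarithmic measure.
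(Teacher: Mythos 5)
Your proposal is correct and follows essentially the same route as the paper's proof: the same splitting of the circle by $|f'(re^{i\theta})|<1$, the same termwise bound on $E$, and the same use of the equation together with the degree hypothesis $l_1+\cdots+l_m\leq n$ to absorb $|f'|^{-n}$ on the complement. Your extra remarks on distributing the non-positive exponent and on passing from $S(r,f')$ to $S(r,f)$ merely make explicit steps the paper leaves implicit.
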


The classic result due to A. Z. Mohon’ko and V. Z. Mohon’ko in \cite{mohonko:74} conserns solutions to differential equations of the form
$P(z,f,f',\cdots,f^{(n)})=0$, where $P$ is polynomial in $f, f', \cdots, f^{(n)}$ with small meromorphic coefficients in $z$. The result states that if a constant $a\in\mathbb{C}$ does not solve $P(z,a,0,\cdots,0)=0$, then $m(r,1/(f-a))=S(r,f)$. Further, using the difference analogue of the lemma on the logarithmic derivative, one can prove a difference analogue of the Mohon'ko result. We have the following result, which is of a similar character.

\begin{proposition}
\label{prop:mohonko-like}
Let $f$ be a meromorphic solution  to 
\begin{equation}
\label{eq:mohonko_eq}
\sum_{j=1}^m a_j\Delta_{c_j} f=f'R(z,f)=f'\frac{P(z,f)}{Q(z,f)},
\end{equation}
where the meromorphic coefficient functions $a_j$ satisfy $m(r,a_j)=S(r,f)$, and where $P$ and $Q$ are polynomial in $f$ with rational coefficients, such that
$$Q(z,f)=(f-b_1)\cdots(f-b_k),$$
where $b_j(z)$ are distinct rational functions. If $f$ satisfies the hypothesis of Corollary~\ref{corollary:ram_vs_pair_higher_order} for each $c_j$ and $n=1$ and $\deg_f(P)\leq\deg_f(Q)$, then
$$\sum_{j=1}^k N\left(r,\frac{1}{f-b_j}\right)=\deg_f(R)T(r,f)+S(r,f).$$
\begin{proof}
We have
\begin{align*}
    m(r,P/Q)&=T(r,P/Q)-N(r,P/Q)\\
    &=\deg_f(R)T(r,f)-\sum_{j=1}^k N\left(r,\frac{1}{f-b_j}\right)\\
    &=\deg_f(R)T(r,f)-kT(r,f)+\sum_{j=1}^k m\left(r,\frac{1}{f-b_j}\right)\\
    &=\sum_{j=1}^k m\left(r,\frac{1}{f-b_j}\right)=\sum_{j=1}^m m(r,\Delta_{c_j} f/f')+m(r,a_j)=S(r,f),
\end{align*}
where we have used the fact that $k=\deg_f(R)$ which follows from the assumption that $\deg_f(P)\leq\deg_f(Q)$.
\end{proof}
\end{proposition}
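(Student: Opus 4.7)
The plan is to bound $m(r,P/Q)$ directly from the equation using Corollary~\ref{corollary:ram_vs_pair_higher_order} and to convert this bound, via the First Main Theorem and a Valiron–Mohon'ko calculation, into the claimed identity for $\sum_{j=1}^k N(r,1/(f-b_j))$.

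First I would solve the equation for $P/Q$ as
\[
\frac{P(z,f)}{Q(z,f)}=\frac{1}{f'}\sum_{j=1}^m a_j\,\Delta_{c_j}f
\]
and apply subadditivity of $\log^+$ to the proximity function to obtain
\[
m(r,P/Q)\le\sum_{j=1}^m\bigl(m(r,a_j)+m(r,\Delta_{c_j}f/f')\bigr)+O(1).
\]
By hypothesis $m(r,a_j)=S(r,f)$, and Corollary~\ref{corollary:ram_vs_pair_higher_order} with $n=1$, $a=0$ and shift $c_j$ gives $m(r,\Delta_{c_j}f/f')=S(r,f)$, so $m(r,P/Q)=S(r,f)$ outside an exceptional set of finite logarithmic measure.

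Next I would use $T(r,P/Q)=m(r,P/Q)+N(r,P/Q)$ to translate this proximity bound into a counting-function identity. Since $P,Q$ are polynomials in $f$ of degree at most $k=\deg_f Q$ with rational coefficients, and since $\deg_f P\le\deg_f Q$ forces $\deg_f R=k$, the Valiron–Mohon'ko identity yields $T(r,P/Q)=k\,T(r,f)+S(r,f)$. The factorization $Q(z,f)=\prod_{j=1}^k(f-b_j)$ with distinct rational $b_j$ then gives
\[
N(r,P/Q)=\sum_{j=1}^k N(r,1/(f-b_j))+S(r,f),
\]
since no linear factor of $Q$ can be cancelled by $P$ (the $b_j$ are distinct) and the rational coefficients contribute only $O(\log r)=S(r,f)$. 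Combined with the first step, this yields $\sum_{j=1}^k m(r,1/(f-b_j))=S(r,f)$. Finally, each $b_j$ being rational gives $T(r,1/(f-b_j))=T(r,f)+S(r,f)$ by the First Main Theorem, so
\[
\sum_{j=1}^k N(r,1/(f-b_j))=k\,T(r,f)-\sum_{j=1}^k m(r,1/(f-b_j))+S(r,f)=\deg_f(R)\,T(r,f)+S(r,f).
\]

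The only delicate point is honestly accounting for cancellations between $P$ and $Q$ and the contribution of the rational coefficients in computing $N(r,P/Q)$. Because the $b_j$ are distinct rational functions and the coefficients are rational, both contributions are of order $O(\log r)=S(r,f)$, so the analysis is routine; the real content of the proof is the proximity bound $m(r,\Delta_{c_j}f/f')=S(r,f)$ supplied by Corollary~\ref{corollary:ram_vs_pair_higher_order}.
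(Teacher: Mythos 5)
Your proposal is correct and follows essentially the same route as the paper: bound $m(r,P/Q)$ by $S(r,f)$ from the equation via Corollary~\ref{corollary:ram_vs_pair_higher_order}, then combine the First Main Theorem with the Valiron--Mohon'ko identity $T(r,P/Q)=\deg_f(R)\,T(r,f)+S(r,f)$ and the identification $N(r,P/Q)=\sum_j N(r,1/(f-b_j))+S(r,f)$ to conclude. Your explicit remarks about cancellation between $P$ and $Q$ and the contribution of the rational coefficients make the argument slightly more careful than the paper's compressed chain of equalities, but the substance is identical.
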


\begin{remark}
The polynomial formulation 
\begin{equation}
\label{eq:mohonkoexample1}
\tilde{P}(z,f,f',\Delta_{c_1}f,\cdots,\Delta_{c_m}f)=\left(\sum_{j=1}^m a_j\Delta_{c_j}f\right)Q(z,f)-f'P(z,f)=0
\end{equation}
of \eqref{eq:mohonko_eq} is solved by any constant, so the usual difference analogue of Mohon'kos' result cannot be applied directly. However Proposition \ref{prop:mohonko-like} does imply that any solution of \eqref{eq:mohonkoexample1} that satisfies the hypothesis of the proposition must satisfy
$$\sum_{j=1}^km\left(r,\frac{1}{f-b_j}\right)=S(r,f).$$
\end{remark}
\begin{example}
Consider the equation introduced by Quispel, Capel and Sahadevan in \cite{quispelcs:92}
\begin{equation}
    \label{eq:painleve_original}
    w(z)[w(z+1)-w(z-1))] + aw'(z) = bw(z),
\end{equation}
where $a$ and $b$ are constants. Quispel et al. obtained this equation as a symmetry reduction of the Kac-van Moerbeke equation. They showed that it has a Lax pair, possesses a continuum limit to the first Painlevé equation, and, in the special case where $b=0$, the equation exhibits particular soliton and rational solutions. The special case $b=0$ can be rewritten in the form
\begin{equation}
\label{eq:painleve_recast}
    \Delta_1 w(z)-\Delta_{-1} w(z) = \frac{-aw'(z)}{w(z)}.
\end{equation}
If $w$ is a solution of \eqref{eq:painleve_recast} that satisfies the hypothesis of Proposition \ref{prop:mohonko-like}, then the proposition implies that
$$N(r,1/w)=T(r,1/w)+S(r,w).$$
\end{example}

\section*{Appendix}
\begin{citelemma}[Continuous variable variant of Fatou's lemma with the counting measure]
Let $f_k$ be a collection of bounded non-negative continuous functions. Then
$$\sum_{k=1}^\infty \liminf_{r\to\infty} f_k(r)\leq\liminf_{r\to\infty}\sum_{k=1}^\infty f_k(r).$$
\end{citelemma}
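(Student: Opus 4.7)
The plan is to reduce the statement to the case of finite partial sums and then invoke a monotone convergence argument. First, I would establish that for every $N\in\mathbb{N}$,
$$\sum_{k=1}^N \liminf_{r\to\infty} f_k(r) \leq \liminf_{r\to\infty} \sum_{k=1}^N f_k(r).$$
This is the superadditivity of $\liminf$ for finite sums, which follows by induction on $N$ from the elementary inequality
$$\liminf_{r\to\infty}\bigl(a(r)+b(r)\bigr)\geq \liminf_{r\to\infty} a(r) + \liminf_{r\to\infty} b(r),$$
valid whenever the two liminfs on the right are not both infinite of opposite sign. Under the present hypotheses (non-negativity plus boundedness of each $f_k$) this is automatic: each $\liminf_{r\to\infty} f_k(r)$ lies in $[0,\infty)$, so no indeterminate form arises. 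The base case reduces to the above inequality, and the induction step follows by grouping.

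Next, since every $f_k$ is non-negative, for each fixed $r$ and every $N$ we have $\sum_{k=1}^N f_k(r) \leq \sum_{k=1}^\infty f_k(r)$, where the right-hand sum is interpreted in $[0,\infty]$. Monotonicity of $\liminf$ therefore gives
$$\liminf_{r\to\infty} \sum_{k=1}^N f_k(r) \leq \liminf_{r\to\infty} \sum_{k=1}^\infty f_k(r).$$
Chaining this with the previous display yields
$$\sum_{k=1}^N \liminf_{r\to\infty} f_k(r) \leq \liminf_{r\to\infty} \sum_{k=1}^\infty f_k(r)$$
uniformly in $N$.

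Finally, I would let $N\to\infty$ on the left-hand side. Because the terms $\liminf_{r\to\infty} f_k(r)$ are non-negative, the partial sums form a non-decreasing sequence whose limit equals $\sum_{k=1}^\infty \liminf_{r\to\infty} f_k(r)$, whether finite or $+\infty$. Passing to the limit in the last display completes the proof.

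There is no real obstacle here: the argument uses only the standard superadditivity of $\liminf$ and term-by-term non-negativity, and the continuity hypothesis on the $f_k$ plays no role in the argument. The only slightly subtle point is that the result is a statement about a continuous parameter $r$ rather than a discrete one; however, since $\liminf_{r\to\infty}g(r)=\sup_{R>0}\inf_{r\geq R}g(r)$, the elementary properties of $\liminf$ used above carry over verbatim from the sequential setting.
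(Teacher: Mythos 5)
Your proof is correct, but it follows a different route from the paper's. The paper handles the passage from the continuous parameter $r$ to a discrete one by extracting a sequence $(r_j)$ with $r_j\to\infty$ along which $\sum_{k}f_k(r_j)$ converges to $\liminf_{r\to\infty}\sum_k f_k(r)$, noting that $\liminf_{r\to\infty}f_k(r)\leq\liminf_{j\to\infty}f_k(r_j)$ for each $k$, and then invoking the standard sequential Fatou lemma for the counting measure as a black box. You instead give a self-contained elementary argument: superadditivity of $\liminf$ over finite sums (by induction), monotone comparison of the partial sum with the full series using non-negativity, and a final passage $N\to\infty$ on the left. In effect you reprove Fatou's lemma for the counting measure from scratch rather than citing it, which makes your argument longer but entirely elementary; the paper's version is shorter but leans on the measure-theoretic statement. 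Your closing observations are also accurate and worth keeping: neither continuity nor boundedness of the $f_k$ is actually needed (non-negativity alone suffices, with values in $[0,\infty]$), and the identity $\liminf_{r\to\infty}g(r)=\sup_{R>0}\inf_{r\geq R}g(r)$ is exactly what makes the sequential properties of $\liminf$ carry over to the continuous parameter.
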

\begin{proof}[Proof of variant of Fatou's lemma:]
Let $(r_j)_{j\in\mathbb{N}}$ be a sequence such that $r_j\to\infty$ as $j\to\infty$ and 
$$\lim_{j\to\infty}\sum_{k=1}^\infty f_k(r_j)=\liminf_{r\to\infty}\sum_{k=1}^\infty f_k(r),$$
which exists by the definition of limit inferior. Now

\begin{align*}
    &\sum_{k=1}^\infty \liminf_{r\to\infty} f_k(r)
    \leq\sum_{k=1}^\infty \liminf_{j\to\infty} f_k(r_j)
    \leq\liminf_{j\to\infty}\sum_{k=1}^\infty f_k(r_j)\\
    &=\lim_{j\to\infty}\sum_{k=1}^\infty f_k(r_j)
    =\lim_{r\to\infty}\sum_{k=1}^\infty f_k(r)
\end{align*}
where the second inequality follows by the Fatou lemma.
\end{proof}

\bibliographystyle{plain} 
\bibliography{database} 

\end{document}